\documentclass[10pt,a4paper]{article}
\usepackage{amssymb,amsmath,amsopn,amsthm,graphicx,makeidx}
\usepackage[all,2cell]{xy} \UseAllTwocells

\begin{document}

\newtheorem{definition}{Definition}[section]
\newtheorem{definitions}[definition]{Definitions}
\newtheorem{lemma}[definition]{Lemma}
\newtheorem{prop}[definition]{Proposition}
\newtheorem{theorem}[definition]{Theorem}
\newtheorem{cor}[definition]{Corollary}
\newtheorem{cors}[definition]{Corollaries}
\theoremstyle{remark}
\newtheorem{remark}[definition]{Remark}
\theoremstyle{remark}
\newtheorem{remarks}[definition]{Remarks}
\theoremstyle{remark}
\newtheorem{notation}[definition]{Notation}
\theoremstyle{remark}
\newtheorem{example}[definition]{Example}
\theoremstyle{remark}
\newtheorem{examples}[definition]{Examples}
\theoremstyle{remark}
\newtheorem{dgram}[definition]{Diagram}
\theoremstyle{remark}
\newtheorem{fact}[definition]{Fact}
\theoremstyle{remark}
\newtheorem{illust}[definition]{Illustration}
\theoremstyle{remark}
\newtheorem{rmk}[definition]{Remark}
\theoremstyle{definition}
\newtheorem{question}[definition]{Question}
\theoremstyle{definition}
\newtheorem{conj}[definition]{Conjecture}

\newcommand{\stac}[2]{\genfrac{}{}{0pt}{}{#1}{#2}}
\newcommand{\stacc}[3]{\stac{\stac{\stac{}{#1}}{#2}}{\stac{}{#3}}}
\newcommand{\staccc}[4]{\stac{\stac{#1}{#2}}{\stac{#3}{#4}}}
\newcommand{\stacccc}[5]{\stac{\stacc{#1}{#2}{#3}}{\stac{#4}{#5}}}

\renewcommand{\marginpar}[2][]{}

\renewenvironment{proof}{\noindent {\bf{Proof.}}}{\hspace*{3mm}{$\Box$}{\vspace{9pt}}}

\title{Spectra of small abelian categories}
\author{Mike Prest,\\ Alan Turing Building
\\School of Mathematics\\University of Manchester\\
Manchester M13 9PL\\UK\\mprest@manchester.ac.uk}

\maketitle

\tableofcontents

\section{Introduction}\label{secintro}\marginpar{secintro}

Here we take the view that small abelian categories are certain categorical versions of rings and the collection of Serre subcategories of a small abelian category is then analogous to the set of ideals of a commutative ring.  We consider two topologies on the collection of Serre subcategories; one is a very direct lift of the Zariski topology, the other, dual, topology appeared in the context of commutative rings in the work of Thomason \cite{Thom} but, more generally, had already arisen, see \cite{GarkPre3} for the connection, in the work of Ziegler \cite{Zie} on the model theory of modules.  We investigate the primes of these topologies/locales and the corresponding notion of ``local'' abelian category.  We compare with the usual Zariski spectrum on a commutative noetherian ring (which is essentially a special case) and also briefly describe and compare a number of topologies which have been put on the set of isomorphism types of indecomposable injective modules over a commutative ring.

Spectra based on localisation/torsion theories have a long history, see for instance, \cite{GolTops}, \cite{vOVe} and an overview at \cite{GeomNLab}, and there are many ``noncommutative algebraic geometries".  In particular, what we discuss here is related to the work of Rosenberg (e.g.~\cite{Rosen}, \cite{Rosen1}) but, in general, our spectra will have fewer points.  Perhaps the main difference from other work on this topic is that the connection with a larger picture (see \cite{PreRajShv}, \cite{PreADC}) leads us to look only at the torsion theories of finite type in a locally coherent abelian category, equivalently Serre subcategories of small abelian categories.

If ${\mathcal A}$ is a skeletally small abelian category then by $ {\rm Ser}({\mathcal A}) $ we denote the set of Serre subcategories of $ {\mathcal A}$, ordered by inclusion and by $ \langle {\mathcal X}\rangle  $ we denote the Serre subcategory of $ {\mathcal A} $ generated by a collection, ${\mathcal X}$, of objects of ${\mathcal A} $.  Clearly ${\rm Ser}({\mathcal A})$ is a lattice, the points of which give the localisations (up to equivalence) of ${\mathcal A}$, with meet and join given by $ {\mathcal S}\wedge {\mathcal S}'={\mathcal S}\cap {\mathcal S}' $ and $ {\mathcal S}\vee {\mathcal S}'=\langle {\mathcal S}\cup {\mathcal S}'\rangle $.

The two locale structures on ${\rm Ser}({\mathcal A})$ that we will be considering come from the Ziegler and (rep-)Zariski topologies associated to the definable additive category, ${\mathcal D}={\rm Ex}({\mathcal A}, {\bf Ab})$, of exact functors from ${\mathcal A}$ to the category ${\bf Ab}$ of abelian groups (see, for instance, \cite{PreNBK}).  Recall also (\cite[12.10]{PreDefAddCat}) that ${\mathcal A}$ is equivalent to ${\rm fun}({\mathcal D}) = ({\mathcal D},{\bf Ab})^{\rightarrow \prod}$  the category of functors from ${\mathcal D}$ to ${\bf Ab}$ which commute with direct limits and direct products.  We also denote by ${\rm Fun}({\mathcal D}) $ the Ind-completion of ${\rm fun}({\mathcal D}) $ - the, unique-to-equivalence locally coherent Grothendieck category with category of finitely presented objects equivalent to ${\mathcal A}$.

The language of locales is particularly applicable since some of the (induced) maps that we see here are defined only ``at the level of topology'' and not necessarily at the level of points.  Recall that the lattice, ${\rm Op}(T)$, of open subsets of a topological space $T$ forms a {\bf complete Heyting algebra}  ({\bf cHa} for short) (the term ``frame" is also used) - a distributive lattice with infinite joins (hence also infinite meets - for a topology, the interior of the intersection) with meet, $\wedge$, distributing over arbitrary join, $\bigvee$.  With a natural algebraic notion of morphism (namely, a map which preserves finite meets and arbitrary joins) one obtains the category of such algebras.  A {\bf locale} is a complete Heyting algebra but regarded as an object of the opposite category - that is, with arrows going in the direction of continuous maps between topological spaces (noting that a continuous map induces a map in the other direction between the lattices of open sets).  The localic framework (see, e.g., \cite[Chpt.~2]{Joh}) emphasises the view of a topology as its lattice of open sets and it will guide us in identifying the ``primes'' of these locales.

\section{The lattice of Serre subcategories}

The first lemma describes the Serre subcategory generated by a collection of objects, in particular it describes the join operation in ${\rm Ser}({\mathcal A})$.  When, as below, we use the term ``composition chain" we imply nothing (in particular not irreducibility) of the factors.

\begin{lemma}\label{compser}\marginpar{compser} Let ${\mathcal X}$ be any collection of objects in the abelian category ${\mathcal A}$.  Then $A\in \langle {\mathcal X} \rangle$ iff $ A $ has a finite composition chain $ A=A_n\geq A_{n-1}\geq \dots\geq A_0=0 $ with factors $A_{i+1}/A_i$ each of which is a subquotient of an object of $ {\mathcal X}$.

In particular if ${\mathcal S}, {\mathcal S}' \in {\rm Ser}({\mathcal A})$ then ${\mathcal S} \vee {\mathcal S}'$ consists of those objects with a finite composition chain, each factor of which lies in $ {\mathcal S}\cup {\mathcal S}'$.
\end{lemma}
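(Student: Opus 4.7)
The plan is to define $\mathcal{S}$ to be the class of objects of $\mathcal{A}$ that admit a composition chain of the stated form, and then to prove (a) $\mathcal{S}\subseteq \langle \mathcal{X}\rangle$ by induction along the chain, and (b) $\langle \mathcal{X}\rangle \subseteq \mathcal{S}$ by showing that $\mathcal{S}$ is itself a Serre subcategory of $\mathcal{A}$ containing $\mathcal{X}$.

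For (a), suppose $A\in \mathcal{S}$ with chain $0=A_0\leq A_1\leq \dots \leq A_n=A$. Each factor $A_{i+1}/A_i$ is a subquotient of an object of $\mathcal{X}$, hence lies in $\langle \mathcal{X}\rangle$ by closure of a Serre subcategory under subobjects and quotients. An induction on $i$ using the short exact sequence $0\to A_i\to A_{i+1}\to A_{i+1}/A_i\to 0$ and closure under extensions then shows that each $A_i$, in particular $A_n=A$, lies in $\langle \mathcal{X}\rangle$.

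For (b), $\mathcal{X}\subseteq \mathcal{S}$ is trivial (take the chain $0\leq X$). Closure of $\mathcal{S}$ under extensions is immediate: given $0\to A'\to A\to A''\to 0$ with chains for $A'$ and $A''$, pull the chain for $A''$ back to a chain in $A$ starting at $A'$ and concatenate with the chain for $A'$, observing that each new factor is isomorphic to the corresponding factor in $A''$. For closure under subobjects, given $B\hookrightarrow A$ with $A$ possessing chain $(A_i)$, set $B_i=B\cap A_i$; then $B_{i+1}/B_i$ embeds in $A_{i+1}/A_i$, and so is a subquotient of a subquotient of an object of $\mathcal{X}$, hence itself a subquotient of such an object. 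Closure under quotients is dual, taking $C_i=\pi(A_i)$ along a surjection $\pi\colon A\twoheadrightarrow C$ and noting that $C_{i+1}/C_i$ is a quotient of $A_{i+1}/A_i$.

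The only mildly delicate ingredient is the auxiliary fact that a subquotient of a subquotient is a subquotient, which one verifies by a routine application of the correspondence theorem in any abelian category; I do not anticipate a genuine obstacle. The second assertion then follows immediately: an object of $\mathcal{S}\vee \mathcal{S}'=\langle \mathcal{S}\cup \mathcal{S}'\rangle$ has, by the first part, a finite composition chain whose factors are subquotients of objects of $\mathcal{S}\cup \mathcal{S}'$; but $\mathcal{S}$ and $\mathcal{S}'$ are themselves closed under subquotients, so each such factor already lies in $\mathcal{S}\cup \mathcal{S}'$, as claimed.
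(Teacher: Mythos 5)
Your proof is correct and follows exactly the route the paper takes: show that any object with such a chain lies in $\langle\mathcal{X}\rangle$ by closure under subquotients and extensions, and show that the class of objects admitting such a chain is itself a Serre subcategory containing $\mathcal{X}$. The paper leaves the latter verification as ``easily checked''; your proposal simply supplies the standard details (intersecting and pushing forward the chain, concatenating chains for extensions, and the fact that a subquotient of a subquotient is a subquotient), all of which are sound.
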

\begin{proof} Certainly $ \langle {\mathcal X}\rangle  $ must contain any such object and the collection of such objects is easily checked to form a Serre subcategory.
\end{proof}

The lattice ${\rm Ser}({\mathcal A})$ is modular, indeed distributive: as in any lattice $ {\mathcal S}\wedge ({\mathcal S}'\vee {\mathcal S}'') \geq  ({\mathcal S}\wedge {\mathcal S}')\vee ({\mathcal S}\wedge {\mathcal S}'') $ and, for the converse, if $  A\in {\mathcal S}\wedge ({\mathcal S}'\vee {\mathcal S}'') $ then, by the lemma, there is $ A=A_n\geq A_{n-1}\geq \dots \geq A_0=0 $ with each $ A_{i+1}/A_i $ in $ {\mathcal S}' $ or $ {\mathcal S}''$; so each of these factors is in $ {\mathcal S}\wedge {\mathcal S}' $ or $ {\mathcal S}\wedge {\mathcal S}''$, as required.

\begin{lemma}\label{SercHa}\marginpar{SercHa} $ {\rm Ser}({\mathcal A}) $ is a complete Heyting algebra (equivalently, a locale).
\end{lemma}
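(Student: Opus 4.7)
The plan is to verify the two conditions that together make $\mathrm{Ser}(\mathcal A)$ a complete Heyting algebra: (i) completeness of the lattice, and (ii) the infinite distributive law $\mathcal S \wedge \bigvee_i \mathcal S_i = \bigvee_i (\mathcal S \wedge \mathcal S_i)$.

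First I would dispose of completeness. Arbitrary meets are handled by observing that the intersection $\bigcap_i \mathcal S_i$ of any family of Serre subcategories is again closed under subobjects, quotients and extensions, so is again a Serre subcategory; hence arbitrary meets exist and coincide with set-theoretic intersection. For arbitrary joins I would simply set $\bigvee_i \mathcal S_i := \langle \bigcup_i \mathcal S_i\rangle$, which is well-defined by taking the Serre subcategory generated by the union; this is the least Serre subcategory containing every $\mathcal S_i$, so it is genuinely the join in $\mathrm{Ser}(\mathcal A)$.

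For the infinite distributive law I would mimic, essentially verbatim, the argument for finite distributivity given immediately before the lemma, using Lemma \ref{compser} with the arbitrary collection $\mathcal X = \bigcup_i \mathcal S_i$. One inclusion, $\bigvee_i (\mathcal S \wedge \mathcal S_i) \leq \mathcal S \wedge \bigvee_i \mathcal S_i$, is immediate in any lattice. For the other, take $A \in \mathcal S \wedge \bigvee_i \mathcal S_i$. By Lemma \ref{compser} there is a finite chain $A = A_n \geq A_{n-1} \geq \dots \geq A_0 = 0$ whose factors $A_{j+1}/A_j$ are each a subquotient of an object of some $\mathcal S_{i(j)}$; since each $\mathcal S_{i(j)}$ is itself Serre, the factor lies in $\mathcal S_{i(j)}$. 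But each factor is also a subquotient of $A \in \mathcal S$, so it lies in $\mathcal S$, and therefore in $\mathcal S \wedge \mathcal S_{i(j)}$. Applying Lemma \ref{compser} in the other direction, with $\mathcal X = \bigcup_i (\mathcal S \wedge \mathcal S_i)$, yields $A \in \bigvee_i (\mathcal S \wedge \mathcal S_i)$.

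I do not expect any real obstacle here: Lemma \ref{compser} is stated for an arbitrary collection $\mathcal X$, so the same composition-chain argument that established finite distributivity just above the lemma upgrades to arbitrary joins without modification. The only point worth being slightly careful about is to invoke the Serre property of each $\mathcal S_i$ to pass from ``subquotient of an object of $\mathcal S_i$'' to ``in $\mathcal S_i$'' so that the factors genuinely sit in $\mathcal S \wedge \mathcal S_i$ rather than merely in some larger subquotient-closed set.
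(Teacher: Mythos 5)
Your proposal is correct and is essentially the paper's own argument: the paper takes $\bigwedge_\lambda \mathcal S_\lambda = \bigcap_\lambda \mathcal S_\lambda$ and $\bigvee_\lambda \mathcal S_\lambda = \langle \bigcup_\lambda \mathcal S_\lambda \rangle$ and then appeals to the ``obvious modification'' of Lemma~\ref{compser} and of the finite-distributivity argument preceding the lemma, which is exactly the composition-chain verification you have written out in detail.
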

\begin{proof}  One can easily see directly that $ {\rm Ser}({\mathcal A}) $ is a complete lattice with $ \bigwedge _\lambda {\mathcal S}_\lambda =\bigcap _\lambda {\mathcal S}_\lambda  $ and $ \bigvee _\lambda {\mathcal S}_\lambda  =\langle \bigcup _\lambda {\mathcal S}_\lambda \rangle $, to which the obvious modification of the above lemma applies and hence for which the obvious modification of the above argument shows the infinite distributive law $ {\mathcal S}\wedge \bigvee _\lambda {\mathcal S}_\lambda =\bigvee _\lambda {\mathcal S}\wedge {\mathcal S}_\lambda $.
\end{proof}

\begin{example}\label{SerZ}\marginpar{SerZ} Take $ {\mathcal A}={\rm mod}\mbox{-}{\mathbb Z}$. The Serre subcategories are $ 0$ (meaning the full subcategory on all the zero objects), the $ {\mathcal S}_p=\langle {\mathbb Z}_p\rangle  $ for $ p $ a non-zero prime, and arbitrary joins of these, and $ {\mathcal A}=\langle {\mathbb Z}\rangle $. There is a unique maximal proper Serre subcategory, namely that consisting of all torsion groups (the join of all the $ {\mathcal S}_p$) and the interval between that and $ 0 $ is order-isomorphic to the power set of $ {\mathbb N} $ with the $ {\mathcal S}_p $ minimal above $ 0$.
\end{example}

We will need the following observations.  By ${\mathbb A}{\mathbb B}{\mathbb E}{\mathbb X}$ we denote the 2-category of skeletally small abelian categories, exact functors and natural transformations.

\begin{lemma}\label{inser}\marginpar{inser} If $ A,C\in {\mathcal A}\in {\mathbb A}{\mathbb B}{\mathbb E}{\mathbb X} $ then $ C\in \langle A\rangle  $ iff $ C $ has a finite composition chain consisting of subquotients of $ A$.
\end{lemma}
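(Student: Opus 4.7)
The plan is to derive this lemma as the specialisation of Lemma \ref{compser} to the singleton collection ${\mathcal X}=\{A\}$. By definition $\langle A\rangle = \langle\{A\}\rangle$, so Lemma \ref{compser} immediately gives that $C\in\langle A\rangle$ iff $C$ admits a finite composition chain $C=C_n\geq C_{n-1}\geq\dots\geq C_0=0$ with each factor $C_{i+1}/C_i$ a subquotient of some object of $\{A\}$, i.e.\ a subquotient of $A$ itself.

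First I would fix the convention that ``composition chain consisting of subquotients of $A$'' refers, as in Lemma \ref{compser}, to the factors $C_{i+1}/C_i$ of the chain (the preceding paragraph in the excerpt explicitly notes that no simplicity or irreducibility of factors is implied). With that reading, both directions of the biconditional are exactly the two directions of Lemma \ref{compser} in this special case: the ``if'' direction amounts to observing that the class of objects admitting such a chain is closed under subobjects, quotients and extensions, hence forms a Serre subcategory containing $A$ and so contains $\langle A\rangle$; the ``only if'' direction is that $\langle A\rangle$ is in turn contained in this class, since every object obtained by iterated subquotients and extensions from $A$ manifestly has such a filtration.

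There is no genuine obstacle here --- the content is all in Lemma \ref{compser}, and the only thing to be careful about is matching the phrase ``subquotient of an object of ${\mathcal X}$'' there with ``subquotient of $A$'' here when $|{\mathcal X}|=1$. The reason to state inser separately is presumably that the singleton case is the form repeatedly needed in the sequel, and it is cleaner to cite a statement already phrased in terms of a single generator $A$ than to pass each time through an auxiliary collection ${\mathcal X}$.
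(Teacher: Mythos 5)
Your proposal is correct and is essentially the paper's own argument: the paper proves \ref{inser} by the same two observations (any such object lies in $\langle A\rangle$, and the class of objects with such a chain is closed under subobjects, quotients and extensions), which is just the proof of \ref{compser} specialised to ${\mathcal X}=\{A\}$ as you do. No issues.
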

\begin{proof} Clearly any such object must be in $ \langle A\rangle  $ but it is easy to check that the collection of such objects is closed under subobjects, quotients and extensions.
\end{proof}

\begin{lemma}\label{intersectser}\marginpar{intersectser} If $ A,B\in {\mathcal A}\in {\mathbb A}{\mathbb B}{\mathbb E}{\mathbb X} $ are such that $ \langle A\rangle \cap \langle B\rangle \neq 0 $ then $ A $ and $ B $ have a common non-zero subquotient.
\end{lemma}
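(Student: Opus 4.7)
The plan is to pick any nonzero object $C \in \langle A \rangle \cap \langle B \rangle$ and then apply Lemma \ref{inser} twice to pull a common nonzero subquotient out of $C$. Because $C \in \langle A\rangle$, Lemma \ref{inser} gives a finite composition chain $0 = C_0 \leq C_1 \leq \cdots \leq C_n = C$ with each factor $C_{i+1}/C_i$ a subquotient of $A$; since $C \neq 0$ at least one such factor, call it $F$, must be nonzero. So $F$ is a nonzero subquotient of $A$, and our task reduces to finding a nonzero subquotient of $F$ that is also a subquotient of $B$.

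The key observation is that $F$ lies in $\langle B\rangle$: it is a subquotient of $C$, and the Serre subcategory $\langle B\rangle$ is closed under subobjects and quotients. Thus $F$ is a nonzero object of $\langle B\rangle$, so a second application of Lemma \ref{inser}, this time to $F$, produces a finite composition chain of $F$ whose factors are subquotients of $B$. Because $F \neq 0$, at least one of these factors, $H$, is nonzero, and $H$ is a nonzero subquotient of $B$.

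To finish, note that $H$ is a subquotient of $F$, $F$ is a subquotient of $A$, and being a subquotient is transitive in any abelian category: if $F = A'/A''$ with $A'' \leq A' \leq A$ and $H = F_1/F_2$ with $F_2 \leq F_1 \leq F$, then lifting $F_1, F_2$ to subobjects $A_2 \leq A_1$ of $A'$ containing $A''$ gives $H \cong A_1/A_2$. Hence $H$ is a nonzero common subquotient of $A$ and $B$, as required.

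I do not expect a real obstacle here; the argument is essentially a double invocation of Lemma \ref{inser} glued together by the Serre closure properties and transitivity of ``subquotient''. The only point that needs a moment of care is the transitivity step, and even that is a one-line diagram chase.
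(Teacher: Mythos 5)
Your proof is correct and follows essentially the same route as the paper's: pick a nonzero $C$ in the intersection, extract a nonzero factor $F$ (a subquotient of $A$) from its $\langle A\rangle$-composition chain via Lemma \ref{inser}, note $F\in\langle B\rangle$, and extract a nonzero factor of $F$ that is a subquotient of $B$. The only difference is that you spell out the transitivity of ``subquotient,'' which the paper leaves implicit.
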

\begin{proof} Choose a non-zero object $ C\in \langle A\rangle \cap \langle B\rangle $. By the lemma above, $ C $ has a composition chain with factors being subquotients of $ A$; choose a non-zero such factor, $ C'$. Then $ C' $ has a composition chain with factors which are subquotients of $ B$; each such factor is a subquotient both of $ A $ and of $ B$, as required.
\end{proof}

Let $ {\mathcal D}={\rm Ex}({\mathcal A},{\bf Ab}) $ be the definable category corresponding to $ {\mathcal A}\in {\mathbb A}{\mathbb B}{\mathbb E}{\mathbb X}$.  By a definable category we mean a category (equivalent to one) of this form, equivalently a full subcategory of a functor/module category ${\rm Mod}\mbox{-}{\mathcal R}$, where ${\mathcal R}$ is a small preadditive category, which is closed under direct products, direct limits and pure subobjects. To $ {\mathcal D} $ we may associate the complete Heyting algebra, $ {\rm Op}({\rm Zg}({\mathcal D}))$, of open subsets of the Ziegler spectrum, $ {\rm Zg}({\mathcal D})$, of $ {\mathcal D}$, ordered by inclusion.  The {\bf Ziegler spectrum}, ${\rm Zg}({\mathcal D})$ of ${\mathcal D}$ has for its underlying set the set, ${\rm pinj}({\mathcal D})$ of (isomorphism types of) indecomposable pure-injective=algebraically compact objects of ${\mathcal D}$ and has, for a basis of open sets, the sets of the form $(A)=\{ N\in {\rm pinj}({\mathcal D}): A.N\neq 0\}$.  (Since each of ${\mathcal A}$ and ${\mathcal D}$ may be represented as a category of functors on the other, we freely use notation such as $AN=0$.)  The opposite lattice, $ {\rm Cl}({\rm Zg}({\mathcal D}))$, of closed subsets of $ {\rm Zg}({\mathcal D}) $ ordered by inclusion is (e.g.~see \cite[5.1.4, 12.4.1]{PreNBK}) naturally isomorphic to the lattice, $ {\rm Sub}({\mathcal D}) $, of definable subcategories of $ {\mathcal D} $ (again, ordered by inclusion) and so we have the natural isomorphisms:

$${\rm Ser}({\mathcal A}) \simeq {\rm Op}({\rm Zg}({\mathcal D})) \simeq ({\rm Sub}({\mathcal D}))^{\rm op},$$

\noindent with the direct connection from left to right being given by annihilation.  The first isomorphism is given by taking $ {\mathcal S} $ to $ \bigcup _{A\in {\mathcal S}}(A) =  \{ N\in {\rm pinj}({\mathcal D}): AN\neq 0 \mbox{ for some }A\in {\mathcal S}\}$.

A {\bf point} of a locale $ L $ is a Heyting algebra morphism to the two-element locale, hence a prime ideal of $ L$, equivalently its complement - a prime filter of $ L$. Since the lattice $L$ is complete, each prime filter/ideal is principal, so the points of $ L $ correspond to the {\bf prime elements} of $ L$, namely those $ a\in L $ such that $ \{b\in L: b\leq a\} $ is a {\bf prime ideal} (that is, a subset of $L$ which is downwards closed, closed under $\vee$ and such that if $a\wedge b$ belongs to it then either $a$ or $b$ does).

\begin{lemma}\label{primeirred}\marginpar{primeirred} If $ L $ is a locale/cHa then $ a\in L $ is prime iff $ a $ is $\wedge $-{\bf irreducible} (that is, $ a=b\wedge c $ implies $ a=b $ or $ a=c$) iff $ a\geq b\wedge c $ implies $ a\geq b$ or $ a\geq c$.
\end{lemma}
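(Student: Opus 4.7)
The plan is to show the two implications ``prime $\Leftrightarrow$ third condition'' and ``third condition $\Leftrightarrow$ $\wedge$-irreducible'' separately, with the main work living in the direction that recovers the implicational form of primeness from bare $\wedge$-irreducibility.

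First I would unpack the defining equivalence: $a$ is prime iff the principal down-set $\downarrow a=\{b\in L : b\leq a\}$ is a prime ideal. The condition of being downward closed is automatic, and closure under $\vee$ is immediate because $b,c\leq a$ forces $b\vee c\leq a$ (using the universal property of join). What remains of the prime-ideal axiom is exactly ``$b\wedge c\leq a$ implies $b\leq a$ or $c\leq a$'', which is the third condition. So ``prime $\Leftrightarrow$ third condition'' is essentially tautological once the definitions are spelled out.

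Next I would dispatch ``third condition $\Rightarrow$ $\wedge$-irreducible'' in one line: if $a=b\wedge c$, then in particular $a\geq b\wedge c$, so the third condition gives $a\geq b$ or $a\geq c$; combined with the standing inequalities $a=b\wedge c\leq b$ and $a\leq c$, this forces $a=b$ or $a=c$.

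The interesting direction, and the only place where the Heyting/locale (as opposed to merely lattice) structure is used, is ``$\wedge$-irreducible $\Rightarrow$ third condition''. Here I would exploit the infinite distributive law available in a cHa. Suppose $a\geq b\wedge c$ and set $b'=a\vee b$ and $c'=a\vee c$. Then by distributivity
\[
b'\wedge c' \;=\; (a\vee b)\wedge (a\vee c) \;=\; a\vee (b\wedge c) \;=\; a,
\]
the last equality holding because $b\wedge c\leq a$. By $\wedge$-irreducibility applied to this decomposition, either $a=b'=a\vee b$, whence $b\leq a$, or $a=c'=a\vee c$, whence $c\leq a$. This is the third condition.

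The only genuine obstacle is choosing the right replacement pair $(b',c')$ so that $\wedge$-irreducibility can be applied; the natural choice $(b,c)$ fails because $b\wedge c$ need only be $\leq a$, not equal to $a$. Replacing by $(a\vee b,a\vee c)$ and then invoking the distributive law of the cHa restores an exact decomposition $a=b'\wedge c'$, and this is the step where the hypothesis that $L$ is a locale (and not just a lattice) is indispensable.
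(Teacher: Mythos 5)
Your proposal is correct and follows essentially the same route as the paper: the equivalence of primeness with the third condition is definitional, the third condition trivially implies $\wedge$-irreducibility, and the converse is recovered by passing to $b'=a\vee b$, $c'=a\vee c$ and applying distributivity to get $b'\wedge c'=a$ — exactly the paper's argument. Your write-up is merely more explicit about unpacking the prime-ideal definition, which is a harmless elaboration.
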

\begin{proof} The first and third conditions are equivalent just by definition and the third implies the second so suppose that the second condition holds and that $ a\geq b\wedge c$. Set $ b'=a\vee b$, $ c'=a\vee c$, so $ b'\wedge c'=(a\vee b)\wedge (a\vee c)= a\vee (b\wedge c) $ (by distributivity) $ =a$, from which we see that $ a=b' $ or $ c' $ and hence $ b\leq a $ or $ c\leq a$.
\end{proof}

For $ {\mathcal A}\in {\mathbb A}{\mathbb B}{\mathbb E}{\mathbb X} $ set $ {\rm Sp}({\mathcal A})=\{ {\mathcal S}\in {\rm Ser}({\mathcal A}): {\mathcal S} \mbox{ is prime }\}$ to be the set of primes = $\wedge$-irreducible elements = ``points'' of $ {\rm Ser}({\mathcal A})$. Note that if $ L $ is the locale of open subsets of a topology then $ U\in L $ is prime iff its complement is an {\bf irreducible} closed set (that is, is not the union of finitely many proper closed subsets), so the natural bijections ${\rm Ser}({\mathcal A}) \simeq {\rm Op}({\rm Zg}({\mathcal D})) \simeq ({\rm Sub}({\mathcal D}))^{\rm op}$ seen above restrict to

\vspace{4pt}

${\rm Sp}({\mathcal A}) \simeq \{ \mbox{open subsets of }{\rm Zg}({\mathcal D})\mbox{ with irreducible complement}\} $

$\simeq \{ \mbox{irreducible closed subsets of }{\rm Zg}({\mathcal D})\}^{\rm op} $

$\simeq \{ \mbox{definable subcategories of }{\mathcal D}\mbox{ with irreducible support}\}^{\rm op}$.

\vspace{4pt}

Given a topological space ${\mathcal T}$, let $ \sim _0 $ denote the equivalence relation which identifies topologically indistinguishable points (that is, those which belong to exactly the same open sets).  The {\bf specialisation preorder} on ${\mathcal T}$ is defined by $ x\leq y $ iff $ x\in {\rm cl}(y) $ iff $ {\rm cl}(x)\subseteq {\rm cl}(y) $ where we use $ {\rm cl} $ to denote closure in a topological space.  Clearly this induces the specialisation partial order on ${\mathcal T}/\sim_0$.  A {\bf generic point} of a closed set is a point whose closure is that set.

\begin{cor}\label{speclorder}\marginpar{speclorder} If every irreducible closed subset of $ {\rm Zg}({\mathcal D})$, where $ {\mathcal D}={\rm Ex}({\mathcal A},{\bf Ab})$, has a generic point then $ {\rm Sp}({\mathcal A})\simeq ({\rm Zg}({\mathcal D})/\sim _0)^{\rm op} $ where the latter is given the specialisation order.
\end{cor}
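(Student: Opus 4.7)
The plan is to use the chain of isomorphisms already established in the excerpt, namely
$${\rm Sp}({\mathcal A}) \simeq \{\text{irreducible closed subsets of } {\rm Zg}({\mathcal D})\}^{\rm op},$$
and to reduce the corollary to producing an order isomorphism between ${\rm Zg}({\mathcal D})/\sim_0$ (with the specialisation order) and the poset of irreducible closed subsets of ${\rm Zg}({\mathcal D})$ (ordered by inclusion). Taking opposites on both sides then yields the claimed equivalence.

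The candidate map is $\Phi\colon N \mapsto {\rm cl}(N)$. First I would observe that ${\rm cl}(N)$ is always irreducible: if ${\rm cl}(N) = C_1 \cup C_2$ with each $C_i$ closed, then $N$ lies in some $C_i$, and taking closures gives ${\rm cl}(N) \subseteq C_i$. Next, $\Phi$ descends to ${\rm Zg}({\mathcal D})/\sim_0$: by definition of specialisation, $N_1 \sim_0 N_2$ precisely when $N_1$ and $N_2$ belong to the same open sets, which is equivalent to ${\rm cl}(N_1) = {\rm cl}(N_2)$. So $\Phi$ is well defined on $\sim_0$-classes, and the same equivalence shows it is injective on classes.

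Surjectivity of $\Phi$ onto irreducible closed sets is exactly the content of the hypothesis: each irreducible closed subset is asserted to have a generic point, which by definition is a point whose closure is that set. Finally, order-preservation in both directions is immediate from the definition $N_1 \leq N_2 \iff {\rm cl}(N_1) \subseteq {\rm cl}(N_2)$: $\Phi$ takes the specialisation order to the inclusion order, and the inverse (send an irreducible closed set to the $\sim_0$-class of any of its generic points) reverses this. This yields an order isomorphism
$$\bigl({\rm Zg}({\mathcal D})/\sim_0\bigr) \simeq \{\text{irreducible closed subsets of } {\rm Zg}({\mathcal D})\},$$
and taking opposites combined with the displayed bijection from the excerpt concludes the proof.

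No step here is truly hard; the only place where a substantive hypothesis is used is surjectivity, which is exactly where the assumption that every irreducible closed set has a generic point is invoked. The mild subtlety to keep in mind is that uniqueness of the generic point only holds up to $\sim_0$ (so one must genuinely pass to the $T_0$-quotient rather than work with ${\rm Zg}({\mathcal D})$ itself), which is the reason the statement is phrased in terms of ${\rm Zg}({\mathcal D})/\sim_0$.
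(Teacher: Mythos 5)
Your proposal is correct and follows exactly the route the paper intends: the corollary is stated without explicit proof precisely because it reduces to the displayed bijection ${\rm Sp}({\mathcal A})\simeq \{\mbox{irreducible closed subsets of }{\rm Zg}({\mathcal D})\}^{\rm op}$ together with the standard identification (via $N\mapsto {\rm cl}(N)$) of irreducible closed sets having generic points with $\sim_0$-classes of points under the specialisation order. Your write-up just makes those routine steps explicit, including the correct observation that the generic-point hypothesis is used only for surjectivity.
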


It is an open question whether the hypothesis of \ref{speclorder} is always satisfied; under a countability assumption it is, see \ref{gencsifctble}.

\begin{example}\label{commnoethirred}\marginpar{commnoethirred} This is the ``classical" case, of the prime spectrum of a commutative noetherian ring, as reconceived by Gabriel \cite{Gab}.  Let $ R $ be a commutative noetherian ring and set $ {\mathcal A}={\rm mod}\mbox{-}R\in {\mathbb A}{\mathbb B}{\mathbb E}{\mathbb X}$. Then

\noindent ($\ast$) $ {\mathcal S}\in {\rm Ser}({\rm mod}\mbox{-}R) $ is irreducible iff $ {\mathcal S}={\rm mod}\mbox{-}R\cap {\mathcal T}_{{\rm cog}(E(R/P))} $ for some $ P\in {\rm Spec}(R)$,

\noindent that is, $ {\mathcal S}\in {\rm Sp}({\rm mod}\mbox{-}R) $ iff there is a prime ideal $ P $ of $ R $ such that $ {\mathcal S}=\{ A\in {\rm mod}\mbox{-}R: (A,E(R/P))=0\}$ where $E(-)$ denotes the injective hull of $(-)$.  By ${\rm cog}(-)$ we mean the hereditary torsion theory cogenerated by $(-)$; we will use ${\mathcal T}_\tau$ and ${\mathcal F}_\tau$ to denote, respectively, the torsion and torsionfree classes of a torsion theory (which, in this paper, always will mean a hereditary torsion theory unless specified otherwise).  For torsion theories see \cite{Ste} (or \cite{PreNBK} for this and other general background that we will need here); we also give a little more background after \ref{commnoethprim}.
The above assertion ($\ast$) is proved as follows.

\noindent ($\Leftarrow $) Suppose that $ A_1,A_2\in {\rm mod}\mbox{-}R\setminus {\mathcal S}$, so there are non-zero morphisms $ f_i:A_i\rightarrow E(R/P)$. Since $ E(R/P) $ is uniform, $ A_0={\rm im}(f_1)\cap {\rm im}(f_2) \neq 0 $ and then $ A_0\in \langle A_1\rangle \cap \langle A_2\rangle  $ which is, therefore, non-zero, as required.

\noindent ($\Rightarrow $) The hereditary torsion theory on $ {\rm Mod}\mbox{-}R $ generated by $ {\mathcal S} $ (as torsion modules) is determined by the (necessarily prime) ideals $ P $ which are maximal with respect to not being in the filter $ {\mathcal U}_{{\mathcal S}} $ of dense ideals of $ R $ (see \cite[VII.3.4]{Ste}) because $ {\mathcal S} $ is cogenerated by the set of corresponding injectives $ E(R/P)$. So let $ P, Q $ be primes maximal with respect to not being in $ {\mathcal U}_{{\mathcal S}}$; we must show that $ P=Q$.

If this were not the case then irreducibility of $ {\mathcal S} $ would imply $ \langle {\mathcal S},R/P\rangle \cap \langle {\mathcal S},R/Q\rangle >{\mathcal S} $ in ${\rm Ser}({\rm mod}\mbox{-}R)$ so pick a cyclic module, $ R/I $ say, in the difference. By factoring out the torsion submodule of $ R/I $ we may suppose that $ R/I $ is torsionfree, that is, $ ({\mathcal S}, R/I)=0$. Since $ R/I\in \langle {\mathcal S},R/P\rangle  $ it follows that there is a non-zero morphism $ f:R/P\rightarrow R/I$; set $ a=f(1+P)$. Since $ ({\mathcal S},R/I)=0 $ it follows by maximality of $ P $ that $ {\rm ann}_R(a)=P $ so, since $ I $ annihilates every element of $ R/I$, $ P=I$. Similarly $ Q=I=P$, as required.
\end{example}

\begin{prop}\label{commnoethprim}\marginpar{commnoethprim} If $ R $ is a commutative noetherian ring then there is a natural bijection $$ {\rm Sp}({\rm mod}\mbox{-}R)\simeq {\rm Spec}(R) $$ which is order-reversing (if we order $ {\rm Spec}(R) $ by inclusion; so order-preserving if we use the specialisation order).
\end{prop}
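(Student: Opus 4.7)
The plan is to define $\Phi\colon {\rm Spec}(R) \to {\rm Sp}({\rm mod}\mbox{-}R)$ by $\Phi(P) = {\mathcal S}_P := \{A \in {\rm mod}\mbox{-}R : (A, E(R/P)) = 0\}$, then verify that $\Phi$ is a bijection reversing the inclusion order. Surjectivity and the well-definedness of $\Phi$ are already delivered by the forward direction of ($\ast$) in \ref{commnoethirred}, so the real work is in injectivity and the order comparison.

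The key tool I would set up first is the following description of membership in ${\mathcal S}_P$: for a nonzero cyclic module $R/I$, $(R/I, E(R/P)) \neq 0$ iff there is a nonzero element of $E(R/P)$ annihilated by $I$; using that every nonzero element of $E(R/P)$ has $P$-primary annihilator, together with primality of $P$, this reduces to $I \subseteq P$. By injectivity of $E(R/P)$ and the fact that ${\rm mod}\mbox{-}R$ consists of finitely generated modules, one then gets in general: $A \notin {\mathcal S}_P$ iff some nonzero $a \in A$ has ${\rm ann}_R(a) \subseteq P$.

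For injectivity, suppose ${\mathcal S}_P = {\mathcal S}_Q$. Apply the criterion to $A = R/Q$: the element $1+Q$ has annihilator $Q \subseteq Q$, so $R/Q \notin {\mathcal S}_Q = {\mathcal S}_P$; hence there must be a nonzero $a \in R/Q$ with ${\rm ann}_R(a) \subseteq P$. But any nonzero $a \in R/Q$ satisfies ${\rm ann}_R(a) = Q$ (as $Q$ is prime), giving $Q \subseteq P$; by symmetry $P = Q$.

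For the order claim, suppose $P \subseteq P'$. If $A \in {\mathcal S}_{P'}$, then no nonzero $a \in A$ has ${\rm ann}_R(a) \subseteq P'$; a fortiori none has ${\rm ann}_R(a) \subseteq P$, so $A \in {\mathcal S}_P$. Thus ${\mathcal S}_{P'} \subseteq {\mathcal S}_P$, i.e.\ $\Phi$ reverses inclusion, which is precisely the statement that it preserves the specialisation order on ${\rm Spec}(R)$. The only subtlety I anticipate is the equivalence $(A, E(R/P)) = 0 \Leftrightarrow$ no nonzero $a \in A$ has ${\rm ann}_R(a) \subseteq P$; this is where injectivity of $E(R/P)$ and the Matlis-type structure of injective hulls over a commutative noetherian ring are genuinely used, and everything else is formal.
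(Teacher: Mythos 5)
Your proposal is correct and follows essentially the same route as the paper: both rely on \ref{commnoethirred} to identify ${\rm Sp}({\rm mod}\mbox{-}R)$ with the collection of subcategories ${\mathcal S}_P=\{A:(A,E(R/P))=0\}$ (note that well-definedness uses the $(\Leftarrow)$ direction of $(\ast)$ and surjectivity the $(\Rightarrow)$ direction), and then verify injectivity and the order comparison. The paper disposes of the latter with the terse torsion-theoretic chain ``${\mathcal T}_{{\rm cog}(E(R/P))}\subseteq {\mathcal T}_{{\rm cog}(E(R/Q))}$ iff $E(R/Q)$ is not $E(R/P)$-torsionfree iff $Q\leq P$'', which is exactly your elementwise criterion ($A\notin {\mathcal S}_P$ iff some nonzero $a\in A$ has ${\rm ann}_R(a)\subseteq P$) in disguise; your justification via the $P$-primary annihilators of elements of $E(R/P)$ is a correct proof of that criterion, and applying it to $A=R/P$ also yields the converse implication ${\mathcal S}_{P'}\subseteq {\mathcal S}_P\Rightarrow P\subseteq P'$, giving the full order anti-isomorphism as in the paper.
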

\begin{proof}  In view of \ref{commnoethirred}, it remains only to note that $ {\mathcal T}_{{\rm cog}(E(R/P))}\subseteq {\mathcal T}_{{\rm cog}(E(R/Q))} $ iff $ {\rm cog}(E(R/P))\supseteq {\rm cog}(E(R/Q)) $ iff $ E(R/Q) $ is not $ E(R/P)$-torsionfree iff $ Q\leq P$.
\end{proof}

Here, since it will be used again, is what we used above. If $ R $ is commutative noetherian then every torsion theory (i.e.~hereditary torsion theory) on $ {\rm Mod}\mbox{-}R $ is {\bf of finite type}, that is, determined by the finitely presented torsion modules, so we have a natural bijection between $ {\rm Ser}({\rm mod}\mbox{-}R)$ and the lattice of finite-type torsion theories on ${\rm Mod}\mbox{-}R$ ordered by inclusion of torsion classes.

For $ P\in {\rm Spec}(R) $ we sometimes write $ E_P=E(R/P) $ and we set $ {\mathcal U}_P=\{ I\leq R: I\nleq P\} =\{ I: (R/I,E_P)=0\} $ - a Gabriel filter of ideals which defines the torsion theory cogenerated by $ E_P$. More generally, if $ X\subseteq {\rm Spec}(R) $ then set $ {\mathcal U}_X=\{ I\leq R:\mbox{ for all }P\in X, I\nleq P\} =\{ I: V(I)\cap X=\emptyset \} = \bigcap _{P\in X}{\mathcal U}_P $ - the Gabriel filter which determines the torsion theory cogenerated by $ \bigoplus _{P\in X}E_P$ (we write $V(I)$ for $\{ P\in {\rm Spec}(R): I\leq P\}$ - a typical closed subset in the Zariski topology on ${\rm Spec}(R)$). Note that the ideals which survive localisation at this torsion theory are the $ P\in X $ and those beneath them.

Conversely, to a torsion theory $ \tau  $ on $ {\rm Mod}\mbox{-}R $ we associate $ D_\tau =\{ P\in {\rm Spec}(R): P\notin {\mathcal U}_\tau \}$; these are the primes $ P $ such that $R/P $ is not $ \tau $-torsion equivalently, since $ R $ is commutative noetherian and so every torsion theory is stable (the injective hull of a torsion module is torsion, see \cite[VII.4.5]{Ste}), such that $ R/P $ is $ \tau $-torsionfree. Note that $ D_\tau =\bigcap _{I\in {\mathcal U}_\tau }D(I) $.

Thus each hereditary torsion theory $ \tau  $ on $ {\rm Mod}\mbox{-}R $, where $R$ is commutative noetherian, is determined by $ {\mathcal U}_\tau \cap {\rm Spec}(R) $ and $ \tau  $ has the form $ \tau _X={\rm cog}(\bigoplus _{P\in X}E_P) $ for some subset $ X $ of $ {\rm Spec}(R)$, which we can make unique by insisting that it is {\bf generalisation-closed} (i.e.~if $ Q\leq P\in X $ then $ Q\in X$). Note that $ {\mathcal T}_X\cap\, {\rm mod}\mbox{-}R=\{ A: (A,E_P)=0\mbox{ for all }P\in X\} = \{ A: A_{(P)}=0\mbox{ for all }P\in X\}$.
Note also that $ R/Q\in {\mathcal F}\in {{\rm cog}(E(R/P))} $ iff $Q\leq P$.

\section{Local abelian categories}\label{seclocabcat}\marginpar{seclocabcat}

We will say that $ {\mathcal A}\in {\mathbb A}{\mathbb B}{\mathbb E}{\mathbb X}$ is {\bf local} if $ 0\in {\rm Sp}({\mathcal A}) $ that is, if the intersection of any two non-zero Serre subcategories of $ {\mathcal A} $ is non-zero. The term `colocal' might at first sight seem more appropriate but, as we will see, abelian categories which are local in our sense have at most one simple object.  Furthermore the example, \ref{commnoethirred}, of commutative noetherian rings shows that factoring out by an irreducible Serre subcategory is analogous to, indeed generalises, localising at (as opposed factoring out by) a prime ideal.  There are three mutually exclusive possibilities for a local abelian category $ {\mathcal A}$; there are plenty examples of the first two cases but it is an open question whether the third can occur.

\noindent case 1: $ {\mathcal A} $ has a simple object.

\noindent case 2: $ {\mathcal A} $ has no simple object but has a minimal non-zero element of $ {\rm Ser}({\mathcal A})$.

\noindent case 3: $ {\mathcal A} $ has no simple object and the intersection of the non-zero Serre subcategories of $ {\mathcal A} $ is $ 0$.

\noindent Let us consider each case in turn.

\vspace{4pt}

\noindent {\bf case 1}: If $ {\mathcal A} $ is local and has a simple object then it has just one, $ S $ say. For $ \langle S\rangle  $ consists of objects having a finite composition series with factors isomorphic to $ S $ so, if $ S' $ were also simple and not isomorphic to $ S $, then we would have $ \langle S\rangle \cap \langle S'\rangle =0$, contradicting irreducibility. Since the interval in $ {\rm Ser}({\mathcal A}) $ between $ \langle S\rangle  $ and $ 0 $ clearly has no other points, it follows that $ \langle S\rangle  $ is the intersection of all the non-zero Serre subcategories of $ {\mathcal A}$.

Let $ {\mathcal D}={\rm Ex}({\mathcal A},{\bf Ab})$, so $ {\mathcal A} $ is $ {\rm fun}({\mathcal D})$. Since $ S $ is a simple, finitely presented object of $ {\rm Fun}({\mathcal D}) $ it is given by a {\bf minimal pair} of pp formulas (we refer elsewhere, e.g.~\cite{PreNBK}, for the model-theoretic view of what we are doing and just give a quick indication here of what is meant): this means that there is a pair $ \psi <\phi  $ of pp formulas such that, for each $ D\in {\mathcal D} $ the interval between $ \psi (D) $ and $ \phi (D) $ in the lattice of pp-definable subgroups of $ D $ either is simple (i.e.~has no more points) or collapses to one point (that is, the pp-pair is closed on $ {\mathcal D}$).  (The formulas can be in any chosen language for the model theory of $ {\mathcal D}$, corresponding to a choice of module category ${\rm Mod}\mbox{-}{\mathcal R}$ containing ${\mathcal D}$ as a definable subcategory, in which case pp formulas are just finitely generated subfunctors of the forgetful functor from ${\rm mod}\mbox{-}{\mathcal R}$ to ${\bf Ab}$.)  It follows by a result of Ziegler (\cite[9.3]{Zie}, see \cite[5.3.6]{PreNBK}) that there is a unique point $ N $ of $ {\rm Zg}({\mathcal D}) $ such that $ SN\neq 0 $; so $ N $ is a topologically isolated point of $ {\rm Zg}({\mathcal D})$, indeed isolated by a minimal pair, to use the relevant terminology. Let $ A $ be any non-zero object of $ {\mathcal A}$. If we had $ AN=0 $ then the annihilator of $ N $ in $ {\mathcal A} $ would be a non-zero Serre subcategory not containing $ S$, a contradiction. We deduce that $ AN\neq 0$, that is, every proper pp-pair is open on $ N $ and hence the definable subcategory of $ {\mathcal D} $ generated by $ N $ is the whole of $ {\mathcal D}$.

\begin{theorem}\label{locabcase1}\marginpar{locabcase1} If $ {\mathcal A}\in {\mathbb A}{\mathbb B}{\mathbb E}{\mathbb X} $ is local and has a simple object $ S $ then, writing $ {\mathcal D}={\rm Ex}({\mathcal A},{\bf Ab}) $ for the corresponding definable category, there is $ N\in {\rm Zg}({\mathcal D}) $ which is isolated, $ \{ N\}=(S)$, and with $ \langle N\rangle ={\mathcal D} $. In particular $ {\rm Zg}({\mathcal D}) $ is irreducible and has $ N $ for its unique generic point. The annihilator of $ S $ in $ {\mathcal D} $ is the unique maximal proper definable subcategory of $ {\mathcal D} $ and contains all other proper definable subcategories of ${\mathcal D}$.
\end{theorem}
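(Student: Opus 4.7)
The plan is to formalise the discussion in the paragraph immediately preceding the theorem, and then extract the statements about irreducibility, generic points, and the maximal definable subcategory as more-or-less direct corollaries of what is done there. So the bulk of the argument is to produce the isolated point $N$ with $(S) = \{N\}$ and to show $\langle N\rangle = {\mathcal D}$.

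First, since ${\mathcal A}\simeq {\rm fun}({\mathcal D})$, a simple object $S$ of ${\mathcal A}$ corresponds (via the identification of finitely presented functors on ${\mathcal D}$ with pp-pairs) to a minimal pp-pair $\psi<\phi$ for the language of ${\mathcal D}$. By the Ziegler isolation theorem (\cite[9.3]{Zie}, recorded as \cite[5.3.6]{PreNBK}), a minimal pair is opened on a unique point of ${\rm Zg}({\mathcal D})$; call this $N$, and note that by the very definition of the basic open sets $(S) = \{M\in {\rm pinj}({\mathcal D}): SM\neq 0\}$ we get $(S) = \{N\}$, so $N$ is (topologically) isolated. Next, to show $\langle N\rangle = {\mathcal D}$ it is enough, by the ${\rm Ser}({\mathcal A})\simeq ({\rm Sub}({\mathcal D}))^{\rm op}$ correspondence, to show that the annihilator ${\rm ann}_{\mathcal A}(N) = \{A\in {\mathcal A}: AN=0\}$ is the zero Serre subcategory. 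This is where locality enters: ${\rm ann}_{\mathcal A}(N)$ is a Serre subcategory of ${\mathcal A}$ that does not contain $S$ (because $SN\neq 0$); but in the paragraph preceding the theorem it was shown that, under the hypothesis of case 1, $\langle S\rangle$ is contained in every non-zero Serre subcategory of ${\mathcal A}$, so ${\rm ann}_{\mathcal A}(N)$ must be zero, i.e.\ $AN\neq 0$ for every non-zero $A$, which is exactly the statement that $N$ belongs to every non-empty basic open set, so $\langle N\rangle = {\mathcal D}$.

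With $(S)=\{N\}$ and $\langle N\rangle = {\mathcal D}$ in hand, the remaining assertions are immediate. Since the closure ${\rm cl}(\{N\})$ corresponds to the definable subcategory generated by $N$, which is all of ${\mathcal D}$, we have ${\rm cl}(\{N\}) = {\rm Zg}({\mathcal D})$, so $N$ is a generic point and ${\rm Zg}({\mathcal D})$ is irreducible. For uniqueness of the generic point, any generic point $N'$ must meet every non-empty open set and hence lies in $(S)=\{N\}$, forcing $N'=N$. Finally, for the maximality statement, the order-reversing bijection ${\rm Ser}({\mathcal A})\simeq ({\rm Sub}({\mathcal D}))^{\rm op}$ sends the \emph{minimum} non-zero Serre subcategory $\langle S\rangle$ to the \emph{maximum} proper definable subcategory of ${\mathcal D}$; this definable subcategory is precisely the annihilator of $S$ (equivalently of $\langle S\rangle$) in ${\mathcal D}$, and by construction it contains every other proper definable subcategory (each of which corresponds to a non-zero Serre subcategory, hence one containing $\langle S\rangle$).

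The one point that needs care, and is the only real obstacle, is the appeal to Ziegler's isolation result: one must verify that a simple object of ${\rm fun}({\mathcal D})$ really does yield a minimal pp-pair in the sense required by \cite[9.3]{Zie}, and that the uniqueness is exactly the statement that the associated basic open set $(S)$ is a singleton. Once this translation between the functorial and model-theoretic viewpoints is in place, the rest of the argument is just book-keeping with the Ser/Sub correspondence and the definition of closure in ${\rm Zg}({\mathcal D})$.
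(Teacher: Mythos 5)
Your proposal is correct and follows essentially the same route as the paper: the paper's proof is precisely the preceding discussion (simple object $\Leftrightarrow$ minimal pp-pair, Ziegler's isolation theorem giving the unique $N$ with $SN\neq 0$, locality plus minimality of $\langle S\rangle$ forcing ${\rm ann}_{\mathcal A}(N)=0$, and the Ser/Sub anti-isomorphism for the final statement). No discrepancies to report.
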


The last statement is immediate from the anti-isomorphism (see e.g.~\cite[12.4.1]{PreNBK}) between Serre subcategories of $ {\mathcal A} $ and definable subcategories of $ {\mathcal D}$. In the language of pp formulas it says that, if $ \phi >\psi  $ is, as above, a pp-pair such that $ F_{\phi /\psi }\simeq S$, then the unique maximal proper definable subcategory of $ {\mathcal D} $ is defined by closure of this pp-pair.  By $F_{\phi/\psi}$ is meant the functor defined on objects by $D\mapsto \phi(D)/\psi(D)$.

The simplest example of this case is where $ {\mathcal A} $ is $ k\mbox{-}{\rm mod} $ for some division ring $ k$ and so, as is easily checked, $ {\mathcal D} $ is $ {\rm Mod}\mbox{-}k$.  There are lots of more interesting examples: just take any point $ N $, of the Ziegler spectrum of some definable category, which is isolated in its closure by a minimal pair (if a ring $ R $ has Krull-Gabriel dimension then every point of $ {\rm Zg}_R $ will satisfy this condition, \cite[7.10, 8.4]{Zie}, see \cite[5.3.17, 13.2.1]{PreNBK}) and take $ {\mathcal D} $ to be the definable (sub)category that $ N $ generates and $ {\mathcal A} $ to be $ {\rm fun}({\mathcal D})$.

\vspace{4pt}

\noindent {\bf case 2}: Suppose $ {\mathcal A} $ has a minimal Serre subcategory, $ {\mathcal S}_0 $ say, yet has no simple object. Then $ {\mathcal D}_0={\rm Ex}({\mathcal A}/{\mathcal S}_0,{\bf Ab})={\rm ann}_{{\mathcal D}}{\mathcal S}_0 \subseteq {\mathcal D}={\rm Ex}({\mathcal A},{\bf Ab})$, is the unique maximal proper definable subcategory of $ {\mathcal D} $ and contains all proper definable subcategories. Pick $ N\in {\rm Zg}({\mathcal D})\setminus {\rm Zg}({\mathcal D}_0)$. If $ A\in {\mathcal A} $ is non-zero then $ {\mathcal S}_0\subseteq \langle A\rangle  $ so, as in case 1, $ AN=0 $ would imply $ {\mathcal S}_0\subseteq {\rm ann}_{{\mathcal A}}N $ and hence $ N\in {\mathcal D}_0 $ - contradiction. So every proper pp-pair is open on $ N$ which is therefore {\em a} generic point of $ {\rm Zg}({\mathcal D}) $ (${\rm Zg}_{\mathcal R}$ may contain topologically indistinguishable points though, again, not over a ring with Krull-Gabriel dimension, see e.g.~\cite[5.4.14]{PreNBK}).

\begin{theorem}\label{locabcase2}\marginpar{locabcase2} If $ {\mathcal A}\in {\mathbb A}{\mathbb B}{\mathbb E}{\mathbb X} $ is local, has no simple object, but has a minimal non-zero Serre subcategory $ {\mathcal S}_0 $ then every point of $ {\rm Zg}({\mathcal D})\setminus {\rm ann}_{{\mathcal D}}\,{\mathcal S}_0 $ is a generic point of $ {\rm Zg}({\mathcal D})$; in particular $ {\rm Zg}({\mathcal D}) $ is irreducible and has a generic point.

If $ {\mathcal A} $ skeletally countable, in the sense of having only countably many objects up to isomorphism, then there are at least $ 2^{\aleph_0} $ points in $ {\rm Zg}({\mathcal D})\setminus {\rm ann}_{{\mathcal D}}{\mathcal S}_0$, all of which become identified in the space, $ {\rm Zg}({\mathcal D})/\sim _0$, which results modulo topological equivalence.
\end{theorem}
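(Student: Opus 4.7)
The plan is to address the two main assertions in turn. The first---that every $ N\in {\rm Zg}({\mathcal D})\setminus {\rm ann}_{\mathcal D}\,{\mathcal S}_0 $ is a generic point of $ {\rm Zg}({\mathcal D}) $, whence $ {\rm Zg}({\mathcal D}) $ is irreducible with a generic point---is essentially already contained in the paragraph preceding the theorem: for any non-zero $ A\in {\mathcal A}$, minimality of $ {\mathcal S}_0 $ forces $ {\mathcal S}_0\subseteq \langle A\rangle  $, so $ AN=0 $ would place $ N $ in $ {\rm ann}_{\mathcal D}\,{\mathcal S}_0 $---a contradiction. Thus $ N $ lies in every non-empty basic open $ (A) $ and $ {\rm cl}(N)={\rm Zg}({\mathcal D})$. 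Any two such generic points lie in exactly the same basic opens, so are topologically indistinguishable; once we produce $ 2^{\aleph _0} $ of them they therefore all collapse to a single point in $ {\rm Zg}({\mathcal D})/\sim _0 $, which handles the final identification once the cardinality claim is settled.

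For the $ 2^{\aleph _0} $ bound, I would first reduce $ U:={\rm Zg}({\mathcal D})\setminus {\rm ann}_{\mathcal D}\,{\mathcal S}_0 $ to a single basic open.  Minimality of $ {\mathcal S}_0 $ gives $ {\mathcal S}_0=\langle A\rangle  $ for any $ 0\neq A\in {\mathcal S}_0$; by Lemma \ref{inser} every object of $ \langle A\rangle  $ has a composition chain with factors that are subquotients of $ A$, and since passing to a subquotient shrinks support while a short exact sequence $ 0\to A_1\to A'\to A_2\to 0 $ yields $ (A')=(A_1)\cup (A_2)$, one sees $ U=(A)$. Under countability of $ {\mathcal A} $ the lattice of pp-pairs on $ {\mathcal D} $ is countable, and the countable-language version of Ziegler's isolation theorem (\cite[9.3]{Zie}, see also \cite[5.3.17]{PreNBK}) then forces any isolated point of $ (A) $ to be isolated by a minimal pair, which would produce a simple object of $ {\rm fun}({\mathcal D})={\mathcal A} $---excluded by hypothesis. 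So $ (A) $ has no isolated points; equivalently, the pp-pair defining $ (A) $ admits no minimal sub-pair.

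It remains to invoke the standard Ziegler binary-tree construction: recursively refine the pp-pair defining $ (A) $ into a tree $ \{\phi _s/\psi _s\}_{s\in 2^{<\omega }} $ of non-minimal pp-pairs by splitting each node into two mutually incompatible sub-pairs, possible at every stage precisely because no sub-pair is minimal.  For each branch $ \sigma \in 2^\omega  $ the consistent pp-type determined by $ \{\phi _{\sigma |n}/\psi _{\sigma |n}\}_n $ is realised in some indecomposable pure-injective lying in $ (A)$, and branches diverging at a finite level yield non-isomorphic pure-injectives; this supplies $ \geq 2^{\aleph _0} $ distinct points of $ U$. The main obstacle is this tree construction: it is essentially model-theoretic rather than topological (the generic points are all topologically indistinguishable, so a purely topological Cantor--Bendixson argument cannot separate them), and it relies on the equivalence ``no simple object in $ {\mathcal A} $'' $ \Leftrightarrow  $ ``every pp-pair can be refined into two incompatible sub-pairs'', together with the model-theoretic realisation of each branch type in an indecomposable pure-injective---both standard over a countable language but forming the substantive content of the argument.
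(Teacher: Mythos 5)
Your argument for the first assertion is exactly the paper's (it is the paragraph preceding the theorem: locality plus minimality of ${\mathcal S}_0$ give ${\mathcal S}_0\subseteq\langle A\rangle$ for every non-zero $A$, so any $N\notin{\rm ann}_{\mathcal D}\,{\mathcal S}_0$ opens every non-empty basic open set), and for the cardinality claim the paper simply cites Ziegler's theorem (\cite[8.3]{Zie}, see \cite[5.3.18]{PreNBK}) --- precisely the result whose standard binary-tree proof you sketch, applied to the open set $(A)={\rm Zg}({\mathcal D})\setminus{\rm ann}_{\mathcal D}\,{\mathcal S}_0$, which contains no minimal pair because ${\mathcal A}$ has no simple object. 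So the proposal is correct and takes essentially the same route; the one step you assert rather than prove (that diverging branches yield non-isomorphic pure-injectives --- note they are all topologically indistinguishable, so this really is the substantive content of the cited theorem) is exactly what the paper delegates to the reference.
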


The last statement is by \cite[8.3]{Zie}, see \cite[5.3.18]{PreNBK}.

There are examples; indeed, there are examples of this case where there are just two (everything and $0$) Serre subcategories/definable subcategories: a ring $ R $ is said to be {\bf indiscrete} if all points of $ {\rm Zg}_R $ are topologically indistinguishable. A simple von Neumann regular ring is indiscrete (and there are non-artinian examples, for instance the endomorphism ring of a countably-infinite-dimensional vector space modulo its socle); non-coherent examples were constructed in \cite{PRZ1}.  For such rings $ {\mathcal A}={\rm fun}\mbox{-}R $ falls under this case 2.

\vspace{4pt}

\noindent {\bf case 3}:  There is no minimal non-zero Serre subcategory of $ {\mathcal A} $, yet $ {\mathcal A} $ is local. It follows that $ \bigcap {\rm Ser}({\mathcal A})=0 $ and so, if $ {\mathcal D}={\rm Ex}({\mathcal A},{\bf Ab}) $ then $ {\rm Zg}({\mathcal D}) $ is the union of its proper closed subsets, so has no generic point, yet it is irreducible. And conversely, any Ziegler-closed set which is irreducible but has no generic point will give an example. The existence of such an example is an open problem. Herzog showed that a countability assumption excludes it; we can rephrase his result as follows.  The original proof is model-theoretic; a functor-category version is given at \cite[5.4.6]{PreNBK}.

\begin{theorem}\label{gencsifctble}\marginpar{gencsifctble} (\cite[4.7]{HerzDual}) Suppose that $ {\mathcal A}\in {\mathbb A}{\mathbb B}{\mathbb E}{\mathbb X} $ is skeletally countable (meaning just countably many arrows; then, if $ {\mathcal A} $ is local, $ {\mathcal A} $ has a minimal non-zero Serre subcategory.
\end{theorem}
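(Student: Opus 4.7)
The plan is to transfer to the Ziegler side via the isomorphism $ {\rm Ser}({\mathcal A})\simeq {\rm Op}({\rm Zg}({\mathcal D})) $ of Section 2 (with $ {\mathcal D}={\rm Ex}({\mathcal A},{\bf Ab}) $) and then invoke the ``isolation by a minimal pair'' phenomenon that is available in the presence of a countable basis.  Under this dictionary, locality of $ {\mathcal A} $ becomes irreducibility of $ {\rm Zg}({\mathcal D})$, skeletal countability of $ {\mathcal A} $ gives a countable basis of basic opens $ (A)$, $ A\in {\mathcal A}$, and finding a minimal non-zero Serre subcategory is equivalent to finding a minimal non-empty open subset of $ {\rm Zg}({\mathcal D})$.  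Any such minimal open is forced to itself be basic, of the form $ (S) $ for some $ S\in {\mathcal A}$, because every non-empty open contains a basic one.

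The next step is to locate a generic point.  Being a spectral space, $ {\rm Zg}({\mathcal D}) $ is sober, so combining sobriety with the irreducibility coming from locality produces a (unique) generic point $ N\in {\rm Zg}({\mathcal D}) $ whose closure is the whole space; equivalently $ AN\neq 0 $ for every non-zero $ A\in {\mathcal A}$, so that $ \{A\in {\mathcal A}:AN=0\} $ is the zero Serre.

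The essential input is then Herzog's isolation theorem (\cite[4.7]{HerzDual}, with a functor-category version at \cite[5.4.6]{PreNBK}): when the pp-pair lattice of $ {\mathcal D} $ is countable, every point of $ {\rm Zg}({\mathcal D}) $ is isolated in its own closure by a minimal pair.  Applied to our generic $ N $ (whose closure is all of $ {\rm Zg}({\mathcal D})$) this produces an object $ S=F_{\phi /\psi }\in {\mathcal A} $ such that $ (S) $ consists of $ N $ together with its topologically indistinguishable companions, and such that every non-zero subquotient of $ S $ generates the same principal Serre $ \langle S\rangle $.  This is the main obstacle: the proof of Herzog's theorem is itself a back-and-forth/diagonal construction that exploits the countable pp-pair lattice to build a pp-type whose realisation in some indecomposable pure-injective supplies the required isolating minimal pair.

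It remains to verify that $ \langle S\rangle  $ is the minimal non-zero Serre subcategory.  Given any non-zero $ {\mathcal T}\in {\rm Ser}({\mathcal A})$, locality gives $ \langle S\rangle \cap {\mathcal T}\neq 0$, so pick a non-zero $ A $ in the intersection; by \ref{inser}, $ A $ has a finite composition chain whose factors are subquotients of $ S$, and any such non-zero factor $ C $ is also a subquotient of $ A$, hence lies in $ {\mathcal T}$.  The minimality property of $ S $ then forces $ \langle C\rangle =\langle S\rangle $, so $ \langle S\rangle \subseteq {\mathcal T}$, as required.
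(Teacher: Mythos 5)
The paper offers no proof of this statement: it is presented as a rephrasing of Herzog's theorem, with the argument deferred entirely to \cite[4.7]{HerzDual} (functor-category version at [5.4.6] of Prest's book). Your write-up likewise rests on Herzog in the end, which is fine in itself, but the reasoning you wrap around that citation contains two genuine errors. The first is the claim that $ {\rm Zg}({\mathcal D})$, ``being a spectral space, is sober''. The Ziegler spectrum is compact and has a basis of compact open sets, but sobriety --- the existence of generic points for irreducible closed sets --- is precisely the open question recorded after \ref{speclorder} and in case 3 of Section \ref{seclocabcat}; you may not assume it. Under the countability hypothesis a generic point does exist, but only because Herzog's theorem supplies it, so deriving it from ``sobriety'' is circular.

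The second, more serious, error is your formulation of the key input: it is not true that a countable pp-lattice forces every point of $ {\rm Zg}({\mathcal D}) $ to be isolated in its own closure by a minimal pair; that is the property the paper attaches to the existence of Krull--Gabriel dimension (\cite[7.10, 8.4]{Zie}), not to countability. If your generic point $ N$, whose closure is all of $ {\rm Zg}({\mathcal D})$, were isolated there by a minimal pair $ \phi /\psi $, then $ \phi /\psi  $ would be a minimal pair for the whole of $ {\mathcal D} $ and $ S=F_{\phi /\psi } $ would be a \emph{simple} object of $ {\mathcal A} $ --- so every skeletally countable local $ {\mathcal A} $ would fall under case 1 of Section \ref{seclocabcat}. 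That is false: $ {\rm fun}\mbox{-}R $ for a countable simple non-artinian von Neumann regular ring $ R $ is skeletally countable, local and has no simple object (case 2). The actual content of \cite[4.7]{HerzDual} is essentially the statement being proved (that the join of all proper Serre subcategories, equivalently the union of all proper Ziegler-closed subsets, is proper), so once the false intermediate claims are removed your argument collapses back to the bare citation; the verification in your final paragraph then has nothing to feed on, since the ``minimality property of $ S $'' it invokes was never established.
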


We compare briefly with Rosenberg's notion of local abelian category.  That is said in terms of his preodering on objects of an abelian category given by $B\succ A$ iff $A$ is a subquotient of $B^n$ for some $n$ (this implies that $A$ is in the Serre subcategory generated by $B$, but not conversely since the class of such $A$, given $B$, will not in general be closed under extensions).  A non-zero object $A$ of an abelian category ${\mathcal A}$ is {\bf quasifinal} if $B\succ A$ for every non-zero object $B$ of ${\mathcal A}$ and ${\mathcal A}$ is {\bf local} in Rosenberg's sense if it has a quasifinal object.  If $A$ is quasifinal then it is easily seen that $\langle A \rangle$ is the unique minimal non-zero Serre subcategory of ${\mathcal A}$ so, if ${\mathcal A}$ is local in Rosenberg's sense, then it is local in the sense of this paper.

\section{The Zariski locale}\label{seczar}\marginpar{seczar}

Now we turn to the, dual, rep-Zariski topology.  Let $ {\mathcal A}\in {\mathbb A}{\mathbb B}{\mathbb E}{\mathbb X} $ and set $ {\mathcal D}={\rm Ex}({\mathcal A},{\bf Ab})$.

The rep-Zariski topology on $ {\rm pinj}({\mathcal D}) $ has basic open sets the $ [A]=\{ N\in {\rm pinj}({\mathcal D}): AN=0\} $ for $ A\in {\mathcal A} $ and so a typical open set has the form $ \bigcup _\lambda [A_\lambda ] $ since $ [A]\cap [B]=[A\oplus B]$.

Define the {\bf Zariski topology} on $ {\rm Ser}({\mathcal A}) $ by taking for a basis of open sets the $ [A]=\{ {\mathcal S}\in {\rm Ser}({\mathcal A}):A\in {\mathcal S}\} =\{{\mathcal S}\in {\rm Ser}({\mathcal A}):\langle A\rangle \subseteq {\mathcal S}\} $ for $ A\in {\mathcal A} $ (we are recycling the notation but the meaning should be clear from the context). That is, the basic opens are the up-intervals above finitely generated Serre subcategories. (Note that the finitely generated $ {\mathcal S}_0\in {\rm Ser}({\mathcal A}) $ are the finite (also termed compact) elements of $ {\rm Ser}({\mathcal A}) $ in the sense of e.g.~\cite[p.~63]{Joh}.) The typical open set in the Zariski topology on $ {\rm Ser}({\mathcal A}) $ therefore has the form $ \bigcup _\lambda \langle A_\lambda \rangle =\{ {\mathcal S}\in {\rm Ser}({\mathcal A}): A_\lambda \in {\mathcal S}$ for some $ \lambda \}$.

The next result allows us to regard $ {\rm Ser}({\mathcal A}/{\mathcal S}) $ as a subspace of $ {\rm Ser}({\mathcal A}) $.

\begin{lemma}\label{reltoploc}\marginpar{reltoploc} If ${\mathcal S} \in {\rm Ser}({\mathcal A})$ then the rep-Zariski topology on ${\rm Ser}({\mathcal A}/{\mathcal S})$ and the topology induced from ${\rm Ser}({\mathcal A})$ coincide.
\end{lemma}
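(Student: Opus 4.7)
The plan is to use the standard Gabriel correspondence: Serre subcategories of ${\mathcal A}/{\mathcal S}$ biject with Serre subcategories of ${\mathcal A}$ containing ${\mathcal S}$, via $\pi^{-1}$, where $\pi\colon {\mathcal A}\to {\mathcal A}/{\mathcal S}$ is the quotient functor. This order-isomorphism realises ${\rm Ser}({\mathcal A}/{\mathcal S})$ as the up-set $\{{\mathcal T}\in {\rm Ser}({\mathcal A}):{\mathcal T}\supseteq {\mathcal S}\}\subseteq {\rm Ser}({\mathcal A})$, and it is this identification with respect to which the two topologies are to be compared.

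First I would fix the notation: for $A\in {\mathcal A}$ write $\bar A=\pi(A)$, and note that every object of ${\mathcal A}/{\mathcal S}$ is of the form $\bar A$ for some $A\in {\mathcal A}$ (the quotient does not change objects). Under the correspondence, ${\mathcal T}'\in {\rm Ser}({\mathcal A}/{\mathcal S})$ corresponds to ${\mathcal T}=\{A\in {\mathcal A}:\bar A\in {\mathcal T}'\}$, and, by the defining property of this bijection, $A\in {\mathcal T} \Longleftrightarrow \bar A\in {\mathcal T}'$.

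The key computation is then immediate: for any $A\in {\mathcal A}$,
\[
[\bar A]=\{{\mathcal T}'\in {\rm Ser}({\mathcal A}/{\mathcal S}):\bar A\in {\mathcal T}'\}
\ \longleftrightarrow\ \{{\mathcal T}\in {\rm Ser}({\mathcal A}):{\mathcal T}\supseteq {\mathcal S}\text{ and }A\in {\mathcal T}\}
=[A]\cap \{{\mathcal T}\supseteq {\mathcal S}\},
\]
so every Zariski-basic open of ${\rm Ser}({\mathcal A}/{\mathcal S})$ is the intersection of a Zariski-basic open of ${\rm Ser}({\mathcal A})$ with the subspace. Conversely, since every object of ${\mathcal A}/{\mathcal S}$ comes from some $A\in {\mathcal A}$, every trace $[A]\cap\{{\mathcal T}\supseteq {\mathcal S}\}$ of a basic open of ${\rm Ser}({\mathcal A})$ on the subspace is of the form $[\bar A]$. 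Hence the two bases agree, and so do the topologies they generate.

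There is no serious obstacle: the content is entirely the standard bijection between ${\rm Ser}({\mathcal A}/{\mathcal S})$ and Serre subcategories of ${\mathcal A}$ containing ${\mathcal S}$, together with the fact that this bijection matches membership of $A$ in ${\mathcal T}$ with membership of $\bar A$ in the quotient Serre subcategory. The only point that deserves a line of care is the remark that every object of the quotient is of the form $\bar A$, which is what ensures the basic opens on the two sides are literally in bijection rather than one merely refining the other.
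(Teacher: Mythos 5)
Your proof is correct and follows essentially the same route as the paper: identify ${\rm Ser}({\mathcal A}/{\mathcal S})$ with the Serre subcategories of ${\mathcal A}$ containing ${\mathcal S}$ via the Gabriel correspondence, and match the basic opens $[A]$ and $[\bar A]$ in both directions, using that every object of the quotient is the image of an object of ${\mathcal A}$. Nothing is missing.
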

\begin{proof} Given a basic open $[S]$ on ${\rm Ser}({\mathcal A})$ we have $[S]\,\cap\, {\rm Ser}({\mathcal A}/{\mathcal S}) =[S_{\mathcal A}]$ which is a basic open in the topology on ${\rm Ser}({\mathcal A}/{\mathcal S})$.  Conversely, if $S'\in {\mathcal A}/{\mathcal S}$ then there is $S\in {\mathcal S}$ with $S_{\mathcal A} =S'$ and hence with $[S]\,\cap\, {\rm Ser}({\mathcal A}/{\mathcal S}) = [S']$.
\end{proof}

In the case that $ {\mathcal S}=\langle A_0\rangle $ is finitely generated then $ {\rm Ser}({\mathcal A}/{\mathcal S}) =[A_0]$ under the identification above and hence it, and its open subsets, are actually open in $ {\rm Ser}({\mathcal A})$.

\begin{example}\label{zarserZ}\marginpar{zarserZ} The Zariski topology on $ {\rm Ser}({\rm mod}\mbox{-}{\mathbb Z})$ has, for basic open sets:

\noindent $ [0]={\rm Ser}({\rm mod}\mbox{-}{\mathbb Z})$;

\noindent $[{\mathbb Z}_{p_1}\oplus \dots \oplus {\mathbb Z}_{p_n}] $ for $ p_1,\dots,p_n $ any non-zero primes;

\noindent $ [{\mathbb Z}]$.

Similar is $ {\rm Ser}({\rm mod}\mbox{-}{\mathbb Z}_{(p)}) $ where we have:

\noindent $ [0]={\rm Ser}({\rm mod}\mbox{-}{\mathbb Z}_{(p)})=\{ \langle 0\rangle , \langle {\mathbb Z}_{(p)}\rangle , {\rm mod}\mbox{-}{\mathbb Z}_{(p)}\}$;

\noindent $ [{\mathbb Z}_p]=\{ \langle {\mathbb Z}_{(p)}\rangle , {\rm mod}\mbox{-}{\mathbb Z}_{(p)}\}$;

\noindent $ [{\mathbb Z}_{(p)}]=\{ {\rm mod}\mbox{-}{\mathbb Z}_{(p)}\}$,

\noindent the difference here being that $ \{ {\mathbb Q}\} $ is open.
\end{example}

The above example illustrates the general point that this topology $ ({\rm Ser}({\mathcal A}), {\rm Zar}({\rm Ser}({\mathcal A}))) $ is equivalent to $ {\rm Zar}({\mathcal D})$: $$ {\rm Op}({\rm Ser}({\mathcal A}), {\rm Zar}({\rm Ser}({\mathcal A})))\simeq {\rm Op}({\rm Zar}({\mathcal D})) $$ because the basic opens correspond and since $ \langle A\rangle \subseteq \langle B\rangle $, equivalently $ [A]_{{\rm Ser}({\mathcal A})}\supseteq [B]_{{\rm Ser}({\mathcal A})} $ iff $ {\rm ann}_{{\rm pinj}({\mathcal D})}(A)\supseteq {\rm ann}_{{\rm pinj}({\mathcal D})}(B) $ iff $ [A]_{{\rm Zar}({\mathcal D})}\supseteq [B]_{{\rm Zar}({\mathcal D})}$. Thus the rep-Zariski topology on $ {\rm pinj}({\mathcal D}) $ may also be realised as a topology on $ {\rm Ser}({\mathcal A})$. It can even be seen as a topology on the (set of isomorphism classes of the) objects of $ {\mathcal A}$: if we preorder $ {\mathcal A} $ by setting, for $ A, B\in {\mathcal A}$, $ A\leq B $ iff $ A\in \langle B\rangle  $ then the opens of the Zariski topology on $  {\rm Ser}({\mathcal A}) $ correspond exactly to the upwards-closed subsets in this ordering (the Alexandrov topology, see \cite[p.~45]{Joh}, which is the finest for which the poset order is exactly the specialisation order in the topology).

As we did with the Ziegler locale on $ {\rm Ser}({\mathcal A}) $, we consider the primes of the Zariski locale that is, (\ref{primeirred}), the open sets which are not the intersection of two larger open sets.  Let us say that $ A\in {\mathcal A} $ is {\bf Serre-local} if for all $ {\mathcal S},{\mathcal S}'\in {\rm Ser}({\mathcal A}) $ if $ A\in {\mathcal S}\vee {\mathcal S}' $ then either $ A\in {\mathcal S} $ or $ A\in {\mathcal S}' $ - exactly the condition that the Zariski-open set $ [A] $ be prime in the Zariski locale.

\begin{lemma}\label{serloc0}\marginpar{serloc0} The object $ A\in {\mathcal A} $ is Serre-local iff $ \bigvee \{{\mathcal S}\in {\rm Ser}({\mathcal A}): {\mathcal S}<\langle A\rangle \}$ is a proper subcategory of $\langle A\rangle $.
\end{lemma}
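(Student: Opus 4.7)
Write $\mathcal{T} = \bigvee\{\mathcal{S}\in{\rm Ser}(\mathcal{A}): \mathcal{S}<\langle A\rangle\}$; since each such $\mathcal{S}$ is contained in $\langle A\rangle$, we have $\mathcal{T}\subseteq\langle A\rangle$, and the statement to prove says: $A$ is Serre-local iff $\mathcal{T}\subsetneq\langle A\rangle$ iff $A\notin\mathcal{T}$ (equivalences justified by the fact that $\mathcal{T}\subseteq\langle A\rangle$ with $A\in\mathcal{T}$ forcing $\langle A\rangle\subseteq\mathcal{T}$).

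For $(\Leftarrow)$, assume $\mathcal{T}\subsetneq\langle A\rangle$, so $A\notin\mathcal{T}$. Suppose $A\in\mathcal{S}\vee\mathcal{S}'$ but $A\notin\mathcal{S}$ and $A\notin\mathcal{S}'$. The latter means $\langle A\rangle\not\subseteq\mathcal{S}$ and $\langle A\rangle\not\subseteq\mathcal{S}'$, so $\mathcal{S}\cap\langle A\rangle$ and $\mathcal{S}'\cap\langle A\rangle$ are strictly smaller than $\langle A\rangle$, hence both lie in the indexing set defining $\mathcal{T}$, and so both are contained in $\mathcal{T}$. By the infinite distributive law for $\mathrm{Ser}(\mathcal{A})$ established just after Lemma \ref{compser} (and used in \ref{SercHa}),
$$\langle A\rangle \;=\; \langle A\rangle \wedge (\mathcal{S}\vee\mathcal{S}') \;\supseteq\; A \;\in\; (\langle A\rangle\cap\mathcal{S})\vee(\langle A\rangle\cap\mathcal{S}') \;\subseteq\; \mathcal{T},$$
contradicting $A\notin\mathcal{T}$. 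Hence $A$ is Serre-local.

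For $(\Rightarrow)$, assume $A$ is Serre-local and suppose, for contradiction, that $\mathcal{T}=\langle A\rangle$, i.e.\ $A\in\mathcal{T}$. By Lemma \ref{compser} (applied in the infinite form of Lemma \ref{SercHa}), $A$ admits a finite composition chain whose factors lie in $\bigcup\{\mathcal{S}:\mathcal{S}<\langle A\rangle\}$; picking one $\mathcal{S}_i<\langle A\rangle$ per factor yields $A\in\mathcal{S}_1\vee\cdots\vee\mathcal{S}_n$ with each $\mathcal{S}_i\subsetneq\langle A\rangle$. Writing this as $A\in\mathcal{S}_1\vee(\mathcal{S}_2\vee\cdots\vee\mathcal{S}_n)$ and applying Serre-locality repeatedly, we conclude that $A\in\mathcal{S}_i$ for some $i$. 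But $\mathcal{S}_i\subsetneq\langle A\rangle$ forces $A\notin\mathcal{S}_i$, a contradiction. So $\mathcal{T}\subsetneq\langle A\rangle$.

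The only subtle point is the use of the compactness-style reduction from an arbitrary join to a finite one in $(\Rightarrow)$: it is essential that Lemma \ref{compser} describes membership in the join entirely by a finite composition chain, so that only finitely many of the $\mathcal{S}_\lambda$ are actually needed. Given this, Serre-locality (a binary condition) can be iterated finitely many times to derive the contradiction. The rest is a routine combination of distributivity and the description of $\langle A\rangle$.
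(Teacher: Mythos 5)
Your proof is correct and follows essentially the same route as the paper: the forward direction via Lemma \ref{compser} to reduce the join to a finite one and then iterate the binary Serre-locality condition, and the converse via the distributive law applied to $\langle A\rangle\wedge(\mathcal{S}\vee\mathcal{S}')$. (The only blemish is the displayed chain mixing $\supseteq$ and $\in$, where you mean $A\in\langle A\rangle\wedge(\mathcal{S}\vee\mathcal{S}')=(\langle A\rangle\cap\mathcal{S})\vee(\langle A\rangle\cap\mathcal{S}')\subseteq\mathcal{T}$; this is purely notational.)
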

\begin{proof} If $ \langle A\rangle  $ were equal to the join of its proper Serre subcategories then, by \ref{compser}, it would be the join of finitely many of them, so could not be Serre-local. The converse follows from the fact that, by distributivity of $ {\rm Ser}({\mathcal A})$, $ A $ belongs to neither $ \langle B\rangle  $ nor $ \langle C\rangle  $ iff  it does not belong to $ (\langle A\rangle \wedge \langle B\rangle )\vee (\langle A\rangle \wedge \langle C\rangle )$.
\end{proof}

\begin{lemma}\label{serlocobj}\marginpar{serlocobj} The object $ A\in {\mathcal A} $ is Serre-local

\noindent iff for every composition chain $ A=A_n\geq \dots\geq A_0=0 $ there is some $ i $ such that $ \langle A_{i+1}/A_i\rangle =\langle A\rangle  $

\noindent iff for every $ {\mathcal S}\in {\rm Ser}({\mathcal A}) $ and for every such composition chain $ A=A_n\geq \dots\geq A_0=0 $ of $ A$, if $ A\in {\mathcal S} $ then there is $ i $ with $  A_{i+1}/A_i\in {\mathcal S} $

\noindent iff there is a unique maximal ${\mathcal S}\in {\rm Ser}({\mathcal A}) $ with $ A\notin {\mathcal S}$.
\end{lemma}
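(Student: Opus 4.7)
The plan is to close the cycle $(1)\Rightarrow(2)\Rightarrow(3)\Rightarrow(4)\Rightarrow(1)$. Throughout, Lemma \ref{compser} is the combinatorial engine (every composition chain exhibits $A$ as a member of the finite join of its factors' Serre-hulls), Lemma \ref{inser} supplies the reverse inclusion $\langle A_{i+1}/A_i\rangle\subseteq\langle A\rangle$ whenever each factor is a subquotient of $A$, and distributivity of ${\rm Ser}(\mathcal{A})$ lets the Serre-local condition be iterated to arbitrary finite joins.

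For $(1)\Rightarrow(2)$: fix a composition chain $A=A_n\geq\dots\geq A_0=0$. By \ref{compser}, applied to $\mathcal{X}=\{A_{i+1}/A_i:0\leq i<n\}$, we have $A\in\langle\mathcal{X}\rangle=\bigvee_{i=0}^{n-1}\langle A_{i+1}/A_i\rangle$. Induction on $n$ using Serre-locality then places $A$ in some single summand $\langle A_{i+1}/A_i\rangle$, so $\langle A\rangle\subseteq\langle A_{i+1}/A_i\rangle$; \ref{inser} yields equality. Then $(2)\Rightarrow(3)$ is immediate: any $\mathcal{S}\ni A$ satisfies $\langle A\rangle\subseteq\mathcal{S}$, so the factor produced by (2) lies in $\mathcal{S}$.

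For $(3)\Rightarrow(4)$, take $\mathcal{S}^\ast=\bigvee\{\mathcal{S}\in{\rm Ser}(\mathcal{A}):A\notin\mathcal{S}\}$, which will be the unique maximum once we show $A\notin\mathcal{S}^\ast$. Supposing $A\in\mathcal{S}^\ast$, \ref{compser} furnishes a chain of $A$ whose factors lie in $\bigcup_\lambda\mathcal{S}_\lambda$; select one $\mathcal{S}_{\lambda_i}$ for each factor and set $\mathcal{T}=\mathcal{S}_{\lambda_0}\vee\dots\vee\mathcal{S}_{\lambda_{n-1}}$, so $A\in\mathcal{T}$ by \ref{compser}. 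Applying (3) to $\mathcal{T}$ and this chain returns an index $i$ such that $A_{i+1}/A_i\in\mathcal{T}$; traced through the finitely many chosen witnesses, this factor lies already in one specific $\mathcal{S}_{\lambda_i}$, and then $\langle A\rangle\subseteq\mathcal{S}_{\lambda_i}$, forcing the contradiction $A\in\mathcal{S}_{\lambda_i}$. Finally, $(4)\Rightarrow(1)$: if $A\in\mathcal{S}\vee\mathcal{S}'$ while $A\notin\mathcal{S}$ and $A\notin\mathcal{S}'$, both sit inside the unique maximum $\mathcal{S}^\ast$, hence $\mathcal{S}\vee\mathcal{S}'\subseteq\mathcal{S}^\ast$, contradicting $A\in\mathcal{S}\vee\mathcal{S}'$.

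The main obstacle is the step $(3)\Rightarrow(4)$: one must argue that the factor returned by (3), applied to the finitely-supported witness $\mathcal{T}$, can be pinned inside a single $\mathcal{S}_{\lambda_i}$ of the defining family. This is where the compactness (finite support) built into \ref{compser} is essential; the other three implications are essentially formal consequences of distributivity together with \ref{compser} and \ref{inser}.
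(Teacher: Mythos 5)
Your implications $(1)\Rightarrow(2)$, $(2)\Rightarrow(3)$ and $(4)\Rightarrow(1)$ are sound, and realising the unique maximal Serre subcategory avoiding $A$ as ${\mathcal S}^\ast=\bigvee\{{\mathcal S}\in{\rm Ser}({\mathcal A}):A\notin{\mathcal S}\}$ is a clean device. The gap is in $(3)\Rightarrow(4)$, precisely at the step you yourself flag as the main obstacle: from ``the factor $A_{i+1}/A_i$ lies in one specific ${\mathcal S}_{\lambda_i}$'' you conclude ``$\langle A\rangle\subseteq{\mathcal S}_{\lambda_i}$''. But membership of the factor only gives $\langle A_{i+1}/A_i\rangle\subseteq{\mathcal S}_{\lambda_i}$; to upgrade this to $\langle A\rangle\subseteq{\mathcal S}_{\lambda_i}$ you need $A\in\langle A_{i+1}/A_i\rangle$, which is condition (2), not condition (3). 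Since you are arguing in a cycle, (2) is not available at that point, and the cycle does not close.

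Moreover, this link cannot be repaired under the literal reading of (3): as stated, condition (3) is vacuously true of \emph{every} object $A$, because a Serre subcategory containing $A$ contains every subquotient of $A$, hence every factor of every composition chain of $A$. (This is really an infelicity in the statement itself; the intended third condition must carry the content ``there is $i$ with $A\in\langle A_{i+1}/A_i\rangle$'', under which your $(3)\Rightarrow(4)$ argument does go through, since the chosen factor then generates $\langle A\rangle$ and lies in ${\mathcal S}_{\lambda_i}$, giving the contradiction $A\in{\mathcal S}_{\lambda_i}$.) Either adopt that reading explicitly, or decouple (4) from (3) and prove $(1)\Leftrightarrow(4)$ directly, which is in effect what the paper does by referring back to the argument for \ref{serloc0}: existence of a Serre subcategory maximal with respect to not containing $A$ is Zorn's Lemma, and uniqueness of the maximal one is equivalent to condition (1) by the same finite-join/compactness argument from \ref{compser} together with distributivity.
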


The first two equivalences are immediate. For the last, we argue as just above.
Note that, given any non-zero $ A\in {\mathcal A} $ there is, by Zorn's Lemma, at least one Serre subcategory which is maximal with respect to not containing it (and any such Serre subcategory will be (Ziegler-)prime, i.e.~in ${\rm Sp}({\mathcal A})$).

If $ A\in {\mathcal A}\in {\mathbb A}{\mathbb B}{\mathbb E}{\mathbb X} $ and $ {\mathcal S}\in {\rm Ser}({\mathcal A}) $ let us say that $ A $ is $ {\mathcal S}$-{\bf simple} if the localisation, $ A_{{\mathcal S}} $, of $A$ at ${\mathcal S}$ (that is, the image of $A$ under the canonical functor ${\mathcal A} \rightarrow {\mathcal A}/{\mathcal S}$) is a simple object of $ {\mathcal A}/{\mathcal S}$.

\begin{lemma}\label{critobj0}\marginpar{critobj0} The object $ A $ is $ {\mathcal S}$-simple iff $ A\notin {\mathcal S} $ and if, for any exact sequence $ 0\rightarrow A'\rightarrow A\rightarrow A''\rightarrow 0 $ in $ {\mathcal A}$, exactly one of $ A', A'' $ is in $  {\mathcal S}$. If this is so then there is no Serre subcategory strictly lying between $ {\mathcal S} $ and $ \langle A,{\mathcal S}\rangle $.
\end{lemma}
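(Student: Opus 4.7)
The plan is to prove the biconditional directly using exactness of the Serre quotient functor ${\mathcal A}\to{\mathcal A}/{\mathcal S}$ (which annihilates an object iff that object lies in ${\mathcal S}$), and then read off the ``no intermediate Serre subcategory'' statement from the exact-sequence characterisation together with Lemma \ref{compser}.

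For the forward direction, suppose $A_{\mathcal S}$ is simple. Then $A_{\mathcal S}\neq 0$ gives $A\notin{\mathcal S}$, and localising an exact sequence $0\to A'\to A\to A''\to 0$ yields $0\to A'_{\mathcal S}\to A_{\mathcal S}\to A''_{\mathcal S}\to 0$ in ${\mathcal A}/{\mathcal S}$. Simplicity forces $A'_{\mathcal S}=0$ or $A''_{\mathcal S}=0$, so at least one of $A',A''$ is in ${\mathcal S}$; both cannot be, since ${\mathcal S}$ is closed under extensions and $A\notin{\mathcal S}$. For the reverse direction, I would use the standard subobject correspondence for Serre quotients: every subobject of $A_{\mathcal S}$ in ${\mathcal A}/{\mathcal S}$ is of the form $A'_{\mathcal S}$ for some subobject $A'\subseteq A$ of ${\mathcal A}$. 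A proper nonzero subobject $A'_{\mathcal S}$ of $A_{\mathcal S}$ would then produce the exact sequence $0\to A'\to A\to A/A'\to 0$ with $A'_{\mathcal S}\neq 0$ and $(A/A')_{\mathcal S}\neq 0$, i.e.~neither $A'$ nor $A/A'$ in ${\mathcal S}$, contradicting the hypothesis. Since $A\notin{\mathcal S}$ also ensures $A_{\mathcal S}\neq 0$, this gives simplicity.

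For the second statement, suppose ${\mathcal S}\subsetneq{\mathcal T}\subseteq\langle A,{\mathcal S}\rangle$ and pick any $C\in{\mathcal T}\setminus{\mathcal S}$. By Lemma \ref{compser} $C$ has a composition chain whose factors lie in ${\mathcal S}$ or are subquotients of $A$; since ${\mathcal S}$ is Serre and $C\notin{\mathcal S}$, at least one factor $D=A_2/A_1$ (with $A_1\subseteq A_2\subseteq A$) lies in ${\mathcal T}\setminus{\mathcal S}$. Now apply the exact-sequence property to $0\to A_1\to A\to A/A_1\to 0$: the case $A/A_1\in{\mathcal S}$ would force $A_2/A_1\in{\mathcal S}$ as a subobject of $A/A_1$, so $A_1\in{\mathcal S}\subseteq{\mathcal T}$, hence $A_2\in{\mathcal T}$ by the extension $0\to A_1\to A_2\to A_2/A_1\to 0$. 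Apply it again to $0\to A_2\to A\to A/A_2\to 0$: the case $A_2\in{\mathcal S}$ would force the quotient $A_2/A_1\in{\mathcal S}$, so $A/A_2\in{\mathcal S}\subseteq{\mathcal T}$, and the extension $0\to A_2\to A\to A/A_2\to 0$ places $A$ in ${\mathcal T}$, giving ${\mathcal T}=\langle A,{\mathcal S}\rangle$.

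The only genuinely non-routine ingredient is the subobject-lifting fact for Serre quotients used in the reverse direction of the biconditional; everything else is a matter of chasing the exactness of the quotient functor and pushing subquotients through the two canonical short exact sequences $0\to A_i\to A\to A/A_i\to 0$ for $i=1,2$. I expect the principal hazard to be stating the subobject-lifting cleanly (rather than waving at it), since it is the one step that is not entirely formal within ${\mathcal A}$.
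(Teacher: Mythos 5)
Your proposal is correct and follows essentially the same route as the paper: the biconditional is the exactness of the quotient functor (plus the standard subobject correspondence for Serre quotients), and for the second statement both you and the paper use Lemma \ref{compser} to locate a subquotient $A_2/A_1$ of $A$ lying in the intermediate Serre subcategory but not in ${\mathcal S}$, deduce that $A_1$ and $A/A_2$ lie in ${\mathcal S}$, and conclude by extension-closure that $A$ itself lies in the intermediate subcategory. The paper phrases that last deduction directly via simplicity of $A_{\mathcal S}$ rather than via two applications of the exact-sequence condition, but this is only a cosmetic difference.
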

\begin{proof} The first statement is clear since localisation is exact. For the second, suppose we had $ {\mathcal S}<{\mathcal S}'<\langle A,{\mathcal S}\rangle $. Then, by \ref{compser}, a non-zero subquotient of $ A $ would be in $ {\mathcal S}'\setminus {\mathcal S}$, say $ A\geq A'>A''\geq 0 $ with $ A'/A''\in {\mathcal S}'\setminus {\mathcal S}. $ Since $ A_{{\mathcal S}} $ is simple it follows that both $ A/A' $ and $ A'' $ are in $ {\mathcal S}$, but then $ A $ would be in $ {\mathcal S}' $ - contradiction.
\end{proof}

Let us say that $ A $ is {\bf quasisimple} if $ A $ is $ {\mathcal S}_{<A}$-simple, where $ {\mathcal S}_{<A}=\bigvee \{ {\mathcal S}\in {\rm Ser}({\mathcal A}): {\mathcal S}<\langle A\rangle \}$; this already implies (by \ref{serloc0}) that $ A $ is Serre-local.  This is a marginally more general notion than that of a cocritical object (see, e.g., \cite{GolChain}) in torsion theory and the comment just made is the observation that our notion of prime in the Zariski locale includes (at least, in the finite-type, coherent context within which we work) that of prime torsion theories in the sense of, e.g., \cite{GolTops}.

\begin{lemma}\label{critobj}\marginpar{critobj} An object $ A\in {\mathcal A} $ is quasisimple

\noindent iff there is some $ {\mathcal S}\in {\rm Ser}({\mathcal A}) $ such that $ A_{{\mathcal S}} $ is simple in $ {\mathcal A}/{\mathcal S}$ and, for every $ {\mathcal S}\in {\rm Ser}({\mathcal A}) $ and for every exact sequence $ 0\rightarrow A'\rightarrow A\rightarrow A''\rightarrow 0$, if $ A\in {\mathcal S} $ then either $ A'\in {\mathcal S} $ or $ A''\in {\mathcal S} $

\noindent iff $ A $ is Serre-simple and there is $ {\mathcal S}\in {\rm Ser}({\mathcal A}) $ such that $ A_{{\mathcal S}} $ is simple in $ {\mathcal A}/{\mathcal S} $ (in which case we may take $ {\mathcal S}={\mathcal S}_{<A}$).
\end{lemma}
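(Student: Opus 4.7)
The strategy is to establish (i) $\Leftrightarrow$ (iii) directly, and to read (ii) off as a slight repackaging of (iii). The key ingredients are the characterisations of Serre-locality from \ref{serloc0} and \ref{serlocobj} together with the short-exact-sequence criterion for $\mathcal{S}$-simplicity from \ref{critobj0}. The implication (i) $\Rightarrow$ (iii) is essentially tautological: quasisimplicity of $A$ means $A_{\mathcal{S}_{<A}}$ is simple, so the existence clause is witnessed by $\mathcal{S} = \mathcal{S}_{<A}$, and simplicity forces $A_{\mathcal{S}_{<A}} \neq 0$, i.e.\ $A \notin \mathcal{S}_{<A}$, which by \ref{serloc0} is precisely Serre-locality of $A$.

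The substantive direction is (iii) $\Rightarrow$ (i). Given $A$ Serre-local and some $\mathcal{S}_0$ with $A_{\mathcal{S}_0}$ simple, I want to show $A_{\mathcal{S}_{<A}}$ is simple in $\mathcal{A}/\mathcal{S}_{<A}$. Non-triviality of $A_{\mathcal{S}_{<A}}$ again follows from \ref{serloc0}. For simplicity itself, every subobject of $A_{\mathcal{S}_{<A}}$ in the quotient is, up to isomorphism, of the form $A'_{\mathcal{S}_{<A}}$ for some $A' \leq A$ in $\mathcal{A}$, so it suffices to show that for every short exact sequence $0 \to A' \to A \to A'' \to 0$ at least one of $A', A''$ lies in $\mathcal{S}_{<A}$. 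By \ref{critobj0} applied to $\mathcal{S}_0$, exactly one of $A', A''$ lies in $\mathcal{S}_0$; without loss of generality $A' \in \mathcal{S}_0$, so $\langle A' \rangle \subseteq \mathcal{S}_0$. Since $A \notin \mathcal{S}_0$, one cannot have $\langle A' \rangle = \langle A \rangle$, and because $\langle A' \rangle \subseteq \langle A \rangle$ automatically, we conclude $\langle A' \rangle < \langle A \rangle$, placing $A'$ inside $\mathcal{S}_{<A}$.

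The equivalence with (ii) then drops out: the existence clause is identical to that of (iii), and the short-exact-sequence clause expresses Serre-locality of $A$ in the SES-parallel form of \ref{serlocobj}, so (ii) is (iii) rephrased, with $\mathcal{S}_{<A}$ still serving as the canonical witnessing subcategory. The main obstacle I foresee is the step in (iii) $\Rightarrow$ (i) in which $\mathcal{S}_0$ need not be comparable to $\mathcal{S}_{<A}$ — indeed $\mathcal{S}_0$ may contain objects lying outside $\langle A \rangle$ altogether — so the two subcategories cannot be compared globally. The workaround is to extract information one subobject at a time: the containment $\langle A' \rangle \subseteq \mathcal{S}_0$ together with $A \notin \mathcal{S}_0$ is used to force the \emph{local} inequality $\langle A' \rangle < \langle A \rangle$, which by the very definition of $\mathcal{S}_{<A}$ places $A'$ into it.
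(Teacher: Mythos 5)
Your proof is correct, and for the substantive direction it takes a slightly different route from the paper's. The paper's entire proof is the single remark that if $A_{\mathcal S}$ is simple then $A_{\mathcal S\wedge\langle A\rangle}$ is simple; implicitly one then notes $\mathcal S\wedge\langle A\rangle<\langle A\rangle$ (as $A\notin\mathcal S$), invokes the last sentence of \ref{critobj0} (no Serre subcategory strictly between $\mathcal S\wedge\langle A\rangle$ and $\langle A\rangle$) together with \ref{serloc0} ($\mathcal S_{<A}<\langle A\rangle$) to force $\mathcal S\wedge\langle A\rangle=\mathcal S_{<A}$, whence $A$ is quasisimple. You instead verify the exact-sequence criterion of \ref{critobj0} for $\mathcal S_{<A}$ directly: for each short exact sequence the term lying in $\mathcal S_0$ generates a Serre subcategory properly contained in $\langle A\rangle$ and so lies in $\mathcal S_{<A}$. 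Your ``without loss of generality'' is harmless, since the argument uses only that the relevant term is a subquotient of $A$ (so $\langle A'\rangle\subseteq\langle A\rangle$ and likewise for $A''$), and ``at least one'' upgrades to the required ``exactly one'' because both terms lying in $\mathcal S_{<A}$ would put $A$ there, contradicting \ref{serloc0}. Both arguments rest on the same two lemmas and are of comparable length; the paper's isolates a reusable reduction to the case $\mathcal S\leq\langle A\rangle$ and then uses maximality, while yours avoids the maximality clause of \ref{critobj0} altogether. One caveat on your dispatch of the middle condition: as literally printed its second clause (``if $A\in\mathcal S$ then $A'\in\mathcal S$ or $A''\in\mathcal S$'') is vacuous, since a Serre subcategory containing $A$ contains every subquotient of $A$; your reading of it as the exact-sequence form of Serre-locality is surely what is intended -- the existence clause alone is strictly weaker than quasisimplicity (e.g.\ the indecomposable projective-injective over the $A_2$ quiver becomes simple after factoring out one simple, yet is not Serre-local) -- but that is a defect of the statement, not a gap in your argument.
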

\begin{proof} The only observation needed is that if $ A_{{\mathcal S}} $ is simple then $ A_{{\mathcal S}\wedge \langle A\rangle } $ is simple.
\end{proof}

Next, we compare primes for the rep-Zariski and Ziegler topologies.  If $R$ is commutative noetherian then the definable category corresponding to ${\mathcal A} =R\mbox{-}{\rm mod}$ is ${\rm Inj}\mbox{-}R$ and there is a natural bijection between the various sets (usual, rep-Zariski, Ziegler) of primes (\cite[14.4.13]{PreNBK}).  In the general context it is also the case that if $A\in {\mathcal A}$ is such that $[A]$ is (rep-)Zariski-prime, that is, if the Ziegler-open subset $(A)$ of ${\rm Zg}({\mathcal D})$ is $\cup$-irreducible, then it is the rep-Zariski-closure of a point, $N$, say (\cite[14.2.6]{PreNBK}).  More generally if $U$ is an open set in the Zariski locale such that its (Zariski-irreducible) complement has a Zariski-generic point, then that point also is, of course, a generic point of its own Ziegler-closure.  Since two points are indistinguishable in the Ziegler topology iff they are indistinguishable in the rep-Zariski topology, this gives an injection from those primes of the Zariski locale where the corresponding closed set has a Zariski-generic point to primes of the Ziegler locale.  Conversely, if an irreducible closed set of the Ziegler locale has a generic point $N$ then the intersection, $\bigcap_{N\in (A)} (A)$ of its Ziegler-open neighbourhoods is a closed set in the Zariski locale which is (clearly) irreducible.

\begin{prop}\label{primescorresp}\marginpar{primescorresp}  If ${\mathcal D}$ is a definable category then the points of ${\rm pinj}({\mathcal D})$ induce a natural bijection between those primes for the Ziegler and Zariski locales which have associated (to the corresponding $\cup$-irreducible closed sets) generic points.
\end{prop}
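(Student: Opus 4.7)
The plan is to parametrise the irreducible closed sets with generic points, in either topology, by the points $N\in{\rm pinj}({\mathcal D})$ themselves, via $N\mapsto {\rm cl}_{\rm Zg}(N)$ on the Ziegler side and $N\mapsto {\rm cl}_{\rm Zar}(N)$ on the rep-Zariski side, and then to show that the two relations ``giving the same closure'' coincide. Concretely, I would send the Ziegler-prime whose associated irreducible closed set is ${\rm cl}_{\rm Zg}(N)$ to the rep-Zariski-prime whose irreducible closed set is ${\rm cl}_{\rm Zar}(N)$; as noted in the paragraph before the statement, the latter equals $\bigcap_{AN\neq 0}(A)$, which is closed in the Zariski locale because each $(A)$ is the set-theoretic complement of the basic Zariski-open $[A]$. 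The inverse would be defined by the symmetric rule.

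The central observation is that topological indistinguishability is literally the same relation in both topologies. Since $(A)$ and $[A]$ are complementary subsets of ${\rm pinj}({\mathcal D})$, the condition that $N$ and $M$ lie in exactly the same Ziegler basic opens is identical to the condition that they lie in exactly the same Zariski basic opens. I would use this to conclude ${\rm cl}_{\rm Zg}(N) = {\rm cl}_{\rm Zg}(M)$ iff $N\sim_0 M$ (in either sense) iff ${\rm cl}_{\rm Zar}(N) = {\rm cl}_{\rm Zar}(M)$, which makes each of the two assignments above independent of the choice of generic point. Irreducibility of the target closed set is automatic since any singleton closure is irreducible, and $N$ is itself a generic point of its own closure, so in each case the image actually lies in the required subset of primes.

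Finally I would verify that the two maps are mutually inverse, essentially by construction: starting from an irreducible Ziegler-closed set $C={\rm cl}_{\rm Zg}(N)$, the image ${\rm cl}_{\rm Zar}(N)$ admits $N$ (or any $\sim_0$-equivalent point) as a Zariski-generic point, and applying the reverse map returns ${\rm cl}_{\rm Zg}(N)=C$. I do not expect a real obstacle: the substantive content is the coincidence of the two indistinguishability relations, which was already isolated in the discussion preceding the statement. What remains is careful bookkeeping of the two mutually opposite specialisation orders and of the passage via complementation between $\cup$-irreducible open sets and irreducible closed sets, plus the observation that ``irreducible closed set with a generic point'' is by definition exhausted by singleton closures.
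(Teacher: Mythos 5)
Your argument is correct and is essentially the paper's own: the paper establishes the proposition in the paragraph immediately preceding it, likewise by sending a prime with generic point $N$ to the closure of $N$ in the other topology (with $\mathrm{cl}_{\mathrm{Zar}}(N)=\bigcap_{AN\neq 0}(A)$) and using that topological indistinguishability is the same relation for the Ziegler and rep-Zariski topologies because their basic opens are mutually complementary. No substantive difference.
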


This, together with the identification in ``classical" situations of the primes in our sense with primes in the classical sense, makes it reasonable to call the above the primes of the category ${\mathcal A}$ (while allowing that we may be missing some points which exist at the locale but not at the point level and, of course, noting that there are more general notions of primes of an abelian category).

\section{Exact functors}\label{secex}\marginpar{secex}

Any exact functor $ f:{\mathcal A}\rightarrow {\mathcal B} $ between abelian categories induces a map $ {\rm Ser}(f):{\rm Ser}({\mathcal B})\rightarrow {\rm Ser}({\mathcal A}) $ which is defined by taking $ {\mathcal T}\in {\rm Ser}({\mathcal S}) $ to $ f^{-1}{\mathcal T}=\{ A\in {\mathcal A}: fA\in {\mathcal T}\}$. Since $ f $ is exact, $ f^{-1}\in {\rm Ser}({\mathcal A})$. Clearly $  {\rm Ser}(f) $ is order-preserving and $ {\rm Ser}(f) $ commutes with meets - $ f^{-1}({\mathcal S}_1\cap {\mathcal S}_2) =f^{-1}{\mathcal S}_1\cap f^{-1}{\mathcal S}_2 $ - but not with joins.

\begin{example}\label{serjoin}\marginpar{serjoin} Let $ {\mathcal B}={\rm mod}\mbox{-}R $ where $ R $ is a finite-dimensional hereditary algebra with a non-homogeneous tube of rank, say, 2, with quasi-simples $ S_1, S_2 $ and let $ T $ be the extension of $ S_2 $ by $ S_1 $ given by the non-split exact sequence $ 0\rightarrow S_1\rightarrow T\rightarrow S_2\rightarrow 0$. Let $ {\mathcal A} $ be the full subcategory whose objects are $ T $ and its finite self-extensions. Both $ {\mathcal A} $ and $ {\mathcal B} $ are abelian and the inclusion $ {\mathcal A}\rightarrow {\mathcal B} $ is exact. Let $ {\mathcal S}_i $ be the Serre subcategory of $ {\mathcal B} $ generated by $ S_i$; clearly $ f^{-1}({\mathcal S}_1\vee {\mathcal S}_2) ={\mathcal A} $ but $ f^{-1}{\mathcal S}_1=0=f^{-1}{\mathcal S}_2 $ so $ f^{-1}({\mathcal S}_1\vee {\mathcal S}_2)\neq f^{-1}{\mathcal S}_1\vee f^{-1}{\mathcal S}_2$.
\end{example}

\begin{lemma}\label{serfcts}\marginpar{serfcts} Let $ f:{\mathcal A}\rightarrow {\mathcal B} $ be a morphism in $ {\mathbb A}{\mathbb B}{\mathbb E}{\mathbb X}$. Then $ {\rm Ser}(f):{\rm Ser}({\mathcal B})\rightarrow {\rm Ser}({\mathcal A}) $ is a continuous map for the Zariski locale.
\end{lemma}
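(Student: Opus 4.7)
The plan is to reduce the problem to checking that preimages of a sub-basis of opens are open, and then to identify those preimages explicitly. Since the Zariski topology on $ {\rm Ser}({\mathcal A}) $ has the sets $ [A]=\{{\mathcal S}\in {\rm Ser}({\mathcal A}): A\in {\mathcal S}\} $ for $ A\in {\mathcal A} $ as a basis, continuity of $ {\rm Ser}(f):{\rm Ser}({\mathcal B})\rightarrow {\rm Ser}({\mathcal A}) $ amounts to showing that $ {\rm Ser}(f)^{-1}[A] $ is open in $ {\rm Ser}({\mathcal B}) $ for every $ A\in {\mathcal A}$.

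The key step is a direct unwinding of the definition: for $ {\mathcal T}\in {\rm Ser}({\mathcal B})$, one has $ {\mathcal T}\in {\rm Ser}(f)^{-1}[A] $ iff $ A\in f^{-1}{\mathcal T} $ iff $ fA\in {\mathcal T}$, so $ {\rm Ser}(f)^{-1}[A]=[fA] $, which is a basic open of the Zariski topology on $ {\rm Ser}({\mathcal B})$. Here I use that $ f^{-1}{\mathcal T} $ is a Serre subcategory of $ {\mathcal A} $ (noted in the paragraph preceding \ref{serjoin}, using exactness of $ f $) so that $ {\rm Ser}(f) $ is a well-defined map to begin with.

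Because arbitrary unions of basic opens give all opens and inverse image commutes with arbitrary unions, it then follows that $ {\rm Ser}(f)^{-1}\bigl(\bigcup_\lambda [A_\lambda]\bigr) =\bigcup_\lambda [fA_\lambda] $ is open, completing the verification. If one prefers the localic formulation, the induced frame map $ {\rm Op}({\rm Ser}({\mathcal A}))\rightarrow {\rm Op}({\rm Ser}({\mathcal B})) $ sends $ [A]\mapsto [fA] $ on basic opens and is extended by joins; it preserves arbitrary joins by construction, and it preserves finite meets because $ [A]\cap [B]=[A\oplus B] $ and $ f(A\oplus B)=fA\oplus fB$, so $ [fA]\cap [fB]=[f(A\oplus B)]$.

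There is essentially no obstacle here: the content of the lemma is simply that the construction $ {\mathcal T}\mapsto f^{-1}{\mathcal T} $ is compatible with the basic-open description of the Zariski topology, and exactness of $ f $ is used only to ensure that $ f^{-1}{\mathcal T} $ is Serre. The result stands in contrast to \ref{serjoin}, which shows that $ {\rm Ser}(f) $ need not preserve joins, i.e.\ need not be a morphism of locales on the Ziegler side; continuity for the Zariski topology, however, is automatic.
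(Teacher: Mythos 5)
Your proof is correct and takes essentially the same approach as the paper's: the key computation is the identification $ {\rm Ser}(f)^{-1}[A]=[fA] $ on basic opens, followed by compatibility with arbitrary unions. (The paper's proof additionally records the analogous fact for intersections, to cover the Ziegler locale as well, but that goes beyond what the stated lemma requires.)
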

\begin{proof} Take a basic open $ [A] $ of $ {\rm Zar}({\mathcal A})$; then we have $ ({\rm Ser}(f))^{-1}[A] = \{{\mathcal T}\in {\rm Ser}({\mathcal B}): fA\in {\mathcal T}\} =[fA]$.  Then $ ({\rm Ser}(f))^{-1}\bigcup _\lambda [A_\lambda ] = \{ {\mathcal T}: fA_\lambda \in {\mathcal T}\mbox{ for some }\lambda \} = \bigcup _\lambda [fA_\lambda ]$.

To check continuity with respect to the Ziegler locale, since the $[A]$ are the basic Ziegler-closed sets, it remains to note that $ ({\rm Ser}(f))^{-1}\bigcap _\lambda [A_\lambda ] = \{ {\mathcal T}: fA_\lambda \in {\mathcal T}\mbox{ for all }\lambda \} = \bigcap _\lambda [fA_\lambda ]$.
\end{proof}

\begin{example}\label{serfnotsp}\marginpar{serfnotsp} It need not be that $ {\rm Ser}(f) $ restricts to a map from $ {\rm Sp}({\mathcal B}) $ to $ {\rm Sp}({\mathcal A})$. (This might seem a defect if one has commutative rings in mind but it is already seen in noncommutative rings: consider the embedding of the ring  $\left( \begin{array}{cc} k & k \\ 0 & k \end{array} \right) $  into  $\left( \begin{array}{cc} k & k \\ k & k \end{array} \right) $: this does not induce a map from the prime ideals of the second ring to that of the first.)
For an example, let $ R=k[A_3] $ where the quiver $ A_3 $ is given the orientiation shown

$\xymatrix{& 1 \ar[dl] \\ 0 \\ & 2 \ar[ul]}$.

\noindent The Auslander-Reiten quiver of $ R $ is below, labelled in terms of simples, projectives and injectives.

$\xymatrix{& P_1={\scriptstyle 1} \stac{1}{0} \ar@{.}[rr] \ar[dr] & & I_2=S_2= {\scriptstyle 0} \stac{0}{1} \\ P_0=S_0= {\scriptstyle 1} \stac{0}{0} \ar[ur] \ar[dr] \ar@{.}[rr] & & I_0={\scriptstyle 1} \stac{1}{1} \ar[ur] \ar[dr] \\ & P_2={\scriptstyle 1} \stac{0}{1} \ar[ur] \ar@{.}[rr] & & I_1=S_1= {\scriptstyle 0} \stac{1}{0}} $

\noindent  Let $ {\mathcal A} $ be the full subcategory on the projective modules of the form $ P_1^n\oplus P_2^m$. Since $ {\rm End}(P_i)=k $ for each $ i $ and $ (P_1,P_2)=0=(P_2,P_1) $ this is an abelian (semisimple) category. Let $ {\mathcal B}'={\rm mod}\mbox{-}R $ and consider the inclusion $ f':{\mathcal A}\rightarrow {\mathcal B}'$. Note that $ 0\notin {\rm Sp}({\mathcal A}) $ since $ \langle P_1\rangle _{{\mathcal A}}\cap \langle P_2\rangle _{{\mathcal A}}=0$; also $ \langle fP_1\rangle _{{\mathcal B}'}\cap \langle fP_2\rangle _{{\mathcal B}'}=\langle P_0\rangle _{{\mathcal B}'}$. This is not yet an example, but we form the quotient category $ {\mathcal B}={\rm mod}\mbox{-}R/\langle S_1,S_2\rangle $; this is non-zero since $ S_0\notin \langle S_1,S_2\rangle $. Let $ f $ be the composition of $ f $ with the quotient $ \pi :{\mathcal B}'\rightarrow {\mathcal B}$. Since $ {\rm Ser}({\mathcal B}) $ is, \ref{reltoploc}, isomorphic to the interval above $ \langle S_1,S_2\rangle  $ in $ {\rm Ser}({\mathcal B}')$, it is a two-point lattice so certainly $ 0\in {\rm Ser}({\mathcal B}) $ is irreducible. But $ {\rm Ser}(f)0=0 $ (since the images of both $ P_1 $ and $ P_2 $ under $ \pi  $ are isomorphic to the image of $ S_0 $ under $ \pi $) which is not in $ {\rm Sp}({\mathcal A})$, as required.
\end{example}

\section{Topologies on $ {\rm inj}_R$}\label{secinj}\marginpar{secinj}

If ${\mathcal C}$ is an additive category then we denote by $ {\rm Inj}({\mathcal C})$ the full subcategory of injective objects of ${\mathcal C}$ and by ${\rm inj}({\mathcal C})$ the class (which in the categories we consider will be a set) of isomorphism types of indecomposable injective objects of ${\mathcal C}$.  In the case that ${\mathcal C} = {\rm Mod}\mbox{-}R$ we write ${\rm Inj}\mbox{-}R$ and ${\rm inj}_R$ respectively.  We recall that the results of Gruson and Jensen \cite{GrJe1} mean that in treating these we are, by proxy, treating the pure-injectives of definable categories in the following sense.

If ${\mathcal D}$ is a definable subcategory of ${\rm Mod}\mbox{-}{\mathcal R}$ then there is a full and faithful embedding ${\rm Mod}\mbox{-}{\mathcal R} \rightarrow ({\mathcal R}\mbox{-}{\rm mod}, {\bf Ab})$ which takes an object $M_{\mathcal R}$ to the tensor functor $_{\mathcal R}L \mapsto M\otimes_{\mathcal R} L$.  This functor takes pure exact sequences to exact sequences and induces an equivalence of categories of ${\rm Mod}\mbox{-}{\mathcal R} $ with the absolutely pure = fp-injective objects of $ ({\mathcal R}\mbox{-}{\rm mod}, {\bf Ab})$.  This equivalence restricts to ${\rm Pinj}\mbox{-}{\mathcal R} \simeq {\rm Inj}\big( ({\mathcal R}\mbox{-}{\rm mod}, {\bf Ab})\big)$ and hence induces a natural bijection ${\rm pinj}({\mathcal D}) \leftrightarrow {\rm inj}\big(({\mathcal R}\mbox{-}{\rm mod}, {\bf Ab})\big)$.  The image of ${\mathcal D}$ under the above embedding forms the class of absolutely pure objects which are torsionfree for a certain torsion theory, $\tau^{\rm d}_{\mathcal D}$, of finite type on the functor category.  If we then compose the embedding of ${\rm Mod}\mbox{-}{\mathcal R}$ into the functor category with localisation at this torsion theory then we obtain an equivalence of ${\mathcal D}$ with the category of absolutely pure objects of the (locally coherent) quotient category ${\mathcal G} =({\mathcal R}\mbox{-}{\rm mod}, {\bf Ab})_{\tau^{\rm d}_{\mathcal D}} $ (see, e.g., \cite[\S\S 12.1, 12.3]{PreNBK})).  That equivalence restricts in particular to an equivalence of ${\rm Pinj}({\mathcal D})$ with ${\rm Inj}({\mathcal G})$ and a natural bijection between ${\rm pinj}({\mathcal D})$ and ${\rm inj}({\mathcal G})$.  It was shown in \cite[\S5]{PreRajShv} that if we give each of the latter two sets either the Ziegler topology or the rep-Zariski topology then the bijection is a homeomorphism.  Thus the study of Ziegler and rep-Zariski spectra is actually subsumed under the apparently special case of the Ziegler and rep-Zariski spectra restricted to points in ${\rm inj}({\mathcal G})$ of a locally coherent category ${\mathcal G}$.

We give a brief account of the descriptions and basic properties of various related topologies which may be put on the set of indecomposable injectives of a locally finitely presented Grothendieck category; we will consider just the case that the category is ${\rm Mod}\mbox{-}R$ for some ring but the changes that have to be made for categories ${\rm Mod}\mbox{-}{\mathcal R}$ where ${\mathcal R}$ is a skeletally small preadditive category are minor.  The general case can then be obtained using that every locally finitely presented Grothendieck category is a finite type localisation of a module category ${\rm Mod}\mbox{-}{\mathcal R}$ (see \cite[11.1.27]{PreNBK}).

These topologies fall into two classes: Ziegler- and Zariski-type topologies which are roughly dual in the sense that basic closed sets in the one type of topology become basic open sets in the other. In cases where the prime spectrum, $ {\rm Spec}(R) $ of $R$, may be identified with a subset of $ {\rm inj}_R $ the Zariski-type topologies give exactly, or something like, the usual Zariski topology on $  {\rm Spec}(R)$.

Ziegler-type topologies have basic open sets of the form

$(M)=\{ E\in {\rm inj}_R: (M,E)\neq 0\}$

\noindent for certain choices of $R$-module $ M$, whereas

Zariski-type topologies have basic open sets of the form

$[M]= (M)^{\rm c}=\{ E\in {\rm inj}_R: (M,E)\neq 0\}$.

\subsection{Ziegler-type topologies on $ {\rm inj}_R$}

Since, by injectivity of the codomain, $ (M)=\bigcup \{ (M'): M'\leq M, M' \mbox{ finitely generated}\} =\bigcup \{(R/I): I\leq R_R, (R/I,M)\neq 0\}$, in order to specify a basis of a Ziegler-type topology it is enough to say for which right ideals $ I $ is the set $ (R/I) $ open.

We recall the following description of a basis of the Ziegler topology on $ {\rm inj}_R$; this can be seen as a kind of (infinitary if $R$ is not right coherent) elimination of quantifiers for injectives.  If $\phi$ is a pp formula for right modules then its elementary dual, $D\phi$, is a formula for left modules (see, e.g., \cite[\SS 1.3, 10.3]{PreNBK}); in particular $D\phi(_RR)$ will be a right ideal of $R$.

\begin{theorem}\label{injbasis1}\marginpar{injbasis1} \cite[7.3]{PreRajShv} Suppose that $ \phi ,\psi  $ are pp formulas in one free variable; then $ (\phi /\psi )\cap {\rm inj}_R=\bigcup \{ (\phi _r): r\in D\psi (_R R)\setminus D\phi (_R R)\}  $ where $ \phi _r $ is the pp formula $ \exists y \,(x=yr\wedge \phi (y))$, which is such that $ \phi _r(M)=\phi (M)r$.
\end{theorem}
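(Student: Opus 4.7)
The key input is an annihilator description of pp-subgroups of an injective right $R$-module: for every $E\in{\rm inj}_R$ and every pp formula $\phi(x)$ in one free variable,
\begin{equation*}
\phi(E) \;=\; \mathrm{ann}_E\bigl(D\phi({}_RR)\bigr) \;=\; \{\,a\in E : ar = 0 \text{ for all } r\in D\phi({}_RR)\,\}.
\end{equation*}
Call this identity $(\ast)$. It is part of the standard elementary duality machinery for pp formulas (see, e.g., \cite[\S\S 1.3, 10.3]{PreNBK}): the inclusion $\phi(E)\subseteq \mathrm{ann}_E(D\phi({}_RR))$ holds for arbitrary modules and is essentially the definition of the pairing with $D\phi$, while the reverse inclusion uses injectivity via Baer's criterion. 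The definitional assertion $\phi_r(M) = \phi(M)\cdot r$ in the statement is immediate from unpacking $\phi_r(x)\equiv \exists y(x = yr \wedge \phi(y))$.

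Granting $(\ast)$, both inclusions of the main equality fall out quickly. For $(\supseteq)$: fix $r\in D\psi({}_RR)\setminus D\phi({}_RR)$ and $E\in{\rm inj}_R$ with $\phi_r(E) = \phi(E)\cdot r\neq 0$, and pick $a\in\phi(E)$ with $ar\neq 0$. If $a$ were in $\psi(E)$, then $(\ast)$ applied to $\psi$ would give $aD\psi({}_RR) = 0$, contradicting $r\in D\psi({}_RR)$ and $ar\neq 0$; hence $a\in\phi(E)\setminus\psi(E)$ and $E\in(\phi/\psi)$. For $(\subseteq)$: given $E\in{\rm inj}_R$ with $\phi(E)\supsetneq\psi(E)$, pick $a\in\phi(E)\setminus\psi(E)$. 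By $(\ast)$, $aD\phi({}_RR) = 0$ while $aD\psi({}_RR) \neq 0$, so some $r\in D\psi({}_RR)$ satisfies $ar\neq 0$; necessarily $r\notin D\phi({}_RR)$ since $a$ annihilates $D\phi({}_RR)$. Then $ar\in \phi(E)\cdot r = \phi_r(E)$ is nonzero, witnessing $E\in(\phi_r)$ for this $r\in D\psi({}_RR)\setminus D\phi({}_RR)$.

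All the real content of the theorem is packed into $(\ast)$; once that identity is available, the rest is a short exercise in tracking where $a$ and $r$ can sit. The principal obstacle is therefore the nontrivial inclusion of $(\ast)$, where the injectivity of $E$ is essential — the standard route is through Baer's criterion combined with the interpretation of $D\phi({}_RR)$ as the right ideal controlling which cyclic right $R$-modules admit a homomorphism into $E$ realising $\phi$ at a prescribed element.
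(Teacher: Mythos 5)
The paper states this result only as a quotation from \cite[7.3]{PreRajShv} and gives no proof of it, so there is nothing internal to compare against; your argument is correct and is the expected one. The identity $(\ast)$ you isolate is precisely the fact the paper itself invokes (via \cite[1.1]{PRZ1}) in the proof of Lemma~\ref{fullsupp}, and the same duality computation underlies Lemma~\ref{injann3}; granting $(\ast)$, your handling of both inclusions, including the check that the witnessing $r$ lies outside $D\phi({}_RR)$, is sound.
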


In the case that $ R $ is right coherent, so $ {\rm inj}_R $ is a closed subset of $ {\rm pinj}_R $ in the Ziegler topology and hence is compact, the union will be a finite one.  We slightly extend some of the other results of \cite[\S 7]{PreRajShv}.

\vspace{4pt}

\noindent 1. The finest Ziegler-type topology has basis $ (M) $ with $ M $ arbitrary, equivalently the $ (R/I) $ with $ I\leq R_R $ any right ideal. We observe that this is the {\bf full support topology} of Burke \cite{BurThes}, \cite{Burpp}. That was defined on the whole of $ {\rm pinj}_R $ to have basis the sets $ (p/\psi ) $ for $ p $ a pp-type and $ \psi  $ a pp formula, where an indecomposable pure-injective $ N $ is in $ (p/\psi ) $ iff $N$ contains an element which satisfies all the pp formulas in $ p $ but does not satisfy $ \psi $.

\begin{lemma}\label{fullsupp}\marginpar{fullsupp} Suppose that $ p=p(x) $ is a pp-type in one free variable and set $ I_p=\sum _{\phi \in p}D\phi (_R R)\leq R_R$.
Then $ (p)=(R/I_p) $ (where $ (p) $ means $ (p(x)/(x=0))$).  More generally, if $ \psi  $ is pp then $ (p/\psi )=\big((D\psi (_R R)+I_p)/I_p\big)$.
\end{lemma}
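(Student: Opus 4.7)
The plan is to reduce both statements to the standard fact (a consequence of elementary duality for injective modules, and the content underlying Theorem \ref{injbasis1}) that for any injective right $R$-module $E$ and any pp formula $\phi(x)$ in one free variable, $\phi(E) = \{e \in E : e \cdot D\phi(_R R) = 0\}$, or equivalently $\phi(E) = {\rm Hom}_R(R/D\phi(_R R), E)$ under the identification of a homomorphism $R/D\phi(_R R) \to E$ with the image of $1 + D\phi(_R R)$.

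Granting this, the first assertion follows immediately. An element $e \in E$ realises the pp-type $p$ iff $e \in \phi(E)$ for every $\phi \in p$, iff $e \cdot D\phi(_R R) = 0$ for every $\phi \in p$, iff $e$ is annihilated by $I_p = \sum_{\phi \in p} D\phi(_R R)$. Hence $E$ contains a non-zero realisation of $p$ iff ${\rm Hom}_R(R/I_p, E) \neq 0$, which is the statement $E \in (R/I_p)$.

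For the general case, the same analysis shows that $E \in (p/\psi)$ iff there is $e \in E$ with $eI_p = 0$ and $e \cdot D\psi(_R R) \neq 0$. Such an $e$ is exactly a homomorphism $\alpha : R/I_p \to E$, namely $r + I_p \mapsto er$, whose restriction to the submodule $(D\psi(_R R) + I_p)/I_p \subseteq R/I_p$ is non-zero. Conversely, any non-zero homomorphism $(D\psi(_R R) + I_p)/I_p \to E$ extends, by injectivity of $E$, to a homomorphism $R/I_p \to E$ which is still non-zero on that submodule; restricted back, we recover an element $e$ of the required kind. Therefore $(p/\psi)$ is precisely the set of indecomposable injectives admitting a non-zero homomorphism from $(D\psi(_R R) + I_p)/I_p$.

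The only delicate point is justifying $\phi(E) = \{e \in E : e \cdot D\phi(_R R) = 0\}$ for a general pp $\phi$, since such a formula carries an existential quantifier whose elimination depends on the injectivity of $E$; this is exactly what Theorem \ref{injbasis1} encodes in pair form, specialising (with $\psi = (x=0)$ and hence $D\psi(_R R) = R$) to give the required identification. Once that is accepted, the remainder is bookkeeping about when a homomorphism out of a cyclic quotient of $R$ restricted to a distinguished submodule survives, which is handled cleanly by injectivity.
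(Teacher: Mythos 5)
Your proposal is correct and follows essentially the same route as the paper: both rest on the fact (the paper cites \cite[1.1]{PRZ1} for it) that for an injective $E$ one has $\phi(E)=\{e\in E: e\,D\phi(_RR)=0\}$, and then translate realisations of $p$ (resp.\ of $p/\psi$) into non-zero homomorphisms out of $R/I_p$ (resp.\ out of $(D\psi(_RR)+I_p)/I_p$, using injectivity for the extension step). You merely spell out the final bookkeeping that the paper compresses into ``the result follows''.
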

\begin{proof} If $E$ is indecomposable injective then $ E\in (p) $ iff there is $ a\in E$, $a\neq 0 $ such that $ a\in p(E)$ (the solution set of $p$ in $E$), that is, iff $ a\in \phi (E) $ for all $ a\in p$, equivalently, see \cite[1.1]{PRZ1}, iff $ aD\phi (_R R)=0 $ for all $ \phi \in p$, that is, iff $ aI_p=0$. For the more general case one adds the condition $ a\notin \psi (E)$, equivalently $ aD\psi (_R R)\neq 0 $ and the result follows.
\end{proof}

Burke (\cite[\S7.4]{BurThes}, see \cite[5.3.64]{PreNBK}) showed that the closed sets in this topology (hence also the open sets) are in natural bijection with the hereditary torsion theories on $ {\rm Fun}\mbox{-}R$, equivalently with the subclasses of $ {\rm Mod}\mbox{-}R $ which are closed under products, pure submodules and pure-injective hulls.

\vspace{4pt}

\noindent 2. For the basis of a topology we may instead take those $ M\in {\mathcal A}(R)$, the smallest (not necessarily full) abelian subcategory of ${\rm Mod}\mbox{-}R$ generated by ${\rm mod}\mbox{-}R$ (\cite[\S6]{PreRajShv}).  This is the restriction of the Ziegler topology on ${\rm pinj}_R$ to ${\rm inj}_R$.

\begin{lemma}\label{injann3}\marginpar{injann3} If $ \phi /\psi  $ is a pp-pair in any number of free variables then $ \big(D\psi (_RR)/D\phi (_RR)\big)\,\cap\, {\rm inj}_R =(\phi /\psi )\cap {\rm inj}_R$.
\end{lemma}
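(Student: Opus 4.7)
The plan is to realise $\phi(E)/\psi(E)$ as a single Hom group and then deduce the lemma from that identification. The key input, for $E\in{\rm inj}_R$ and a pp formula $\phi(\bar x)$ in $n$ free variables, is the natural isomorphism
\[
\phi(E)\;\cong\;{\rm Hom}_R\big(R^n/D\phi(_RR),\,E\big),
\]
which sends $f$ to the tuple $\big(f(\bar e_j + D\phi(_RR))\big)_j$, where $\bar e_1,\dots,\bar e_n$ is the standard basis of $R^n$ and $D\phi(_RR)\subseteq R^n$ is regarded as a right submodule (stable under right multiplication because $_RR_R$ is a bimodule and pp-subgroups commute with the opposite action). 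This is the several-variable version of \cite[1.1]{PRZ1}. Taking a matrix presentation $\phi(\bar x)\equiv \exists \bar y\,(\bar xA+\bar yB=0)$ with $D\phi(_RR)=A(\ker B)$, the containment $\phi(E)\subseteq\{\bar a\in E^n:\bar a\cdot D\phi(_RR)=0\}$ is immediate for any $E$. The reverse, where injectivity enters, runs as follows: if $\bar a$ annihilates $D\phi(_RR)=A(\ker B)$, then the map $\bar a A:R^\ell\to E$ factors through $R^\ell/\ker B = {\rm im}(B)\subseteq R^m$, and lifting this factor map to all of $R^m\to E$ by injectivity of $E$ produces $\bar b\in E^m$ with $\bar a A + \bar b B = 0$, i.e.\ $\bar a\in\phi(E)$.

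Granting this, the rest is formal. Since $\psi\le\phi$, elementary duality reverses the order, giving $D\phi(_RR)\subseteq D\psi(_RR)$ as right submodules of $R^n$, and hence a short exact sequence of right modules
\[
0\to D\psi(_RR)/D\phi(_RR)\to R^n/D\phi(_RR)\to R^n/D\psi(_RR)\to 0.
\]
Applying ${\rm Hom}_R(-,E)$, which is exact since $E$ is injective, and substituting the identification above on the rightmost two terms yields
\[
0\to\psi(E)\to\phi(E)\to{\rm Hom}_R\big(D\psi(_RR)/D\phi(_RR),\,E\big)\to 0,
\]
so ${\rm Hom}_R\big(D\psi(_RR)/D\phi(_RR),\,E\big)\cong\phi(E)/\psi(E)$. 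The left-hand group is non-zero exactly when $E\in\big(D\psi(_RR)/D\phi(_RR)\big)$, while the right-hand one is non-zero exactly when $E\in(\phi/\psi)\cap{\rm inj}_R$, so the two subsets of ${\rm inj}_R$ coincide.

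The main obstacle is the several-variable Hom-identification in the first paragraph; this is the unique step where injectivity of $E$ is genuinely used (the reverse ``double annihilator'' inclusion fails in general for non-injective $E$). Once that is in place, the rest is a one-line diagram chase on the evident short exact sequence.
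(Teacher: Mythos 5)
Your argument is correct and is essentially the paper's proof in functorial packaging: your forward inclusion is the paper's construction of a non-zero map $D\psi({}_RR)/D\phi({}_RR)\to E$ from a tuple in $\phi(E)\setminus\psi(E)$, and the surjectivity of ${\rm Hom}_R(R^n/D\phi({}_RR),E)\to{\rm Hom}_R(D\psi({}_RR)/D\phi({}_RR),E)$ is exactly the paper's extension of $f$ to $f'$ using injectivity of $E$, both arguments resting on the same several-variable form of \cite[1.1]{PRZ1}. One small correction to your closing remark: injectivity of $E$ is genuinely used twice, once for the identification $\phi(E)\cong{\rm Hom}_R(R^n/D\phi({}_RR),E)$ and again for the exactness of ${\rm Hom}_R(-,E)$ on your short exact sequence (the latter being precisely the paper's extension step), though the body of your proof does invoke it correctly in both places.
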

\begin{proof} If $ E\in (\phi /\psi ) $ then there is $ \overline{a}\in E^n $ such that $ \overline{a} \in\phi (E)\setminus \psi (E)$, that is, such that $ \overline{a}.D\phi (R)=0 $ and $ \overline{a}.D\psi (R)\neq 0 $ (the dot here signifies scalar dot product of $ n$-tuples). Define $ D\psi (R)\rightarrow E $ by sending $ \overline{r}=(r_1,\dots,r_n) $ to $ \overline{a}.\overline{r}=\sum _1^na_ir_i $ - then this induces a well-defined non-zero morphism $ D\psi (R)/D\phi (R)\rightarrow E $ and so $ E\in (D\psi /D\phi )$.

For the converse, if $ f:D\psi (R)/D\phi (R)\rightarrow E $ is non-zero then there is an extension of it, $ f':\,_R R^n/D\phi (R)\rightarrow E$, say. Set $ \overline{a}=f'\overline{1}=(f'e_1,\dots, f'e_n) $ where $ e_1,\dots, e_n $ form a free basis of $ R^n$. Then for $ \overline{r}=(r_1,\dots, r_n)\in R^n $ we have $ f'\overline{r}=\overline{a}.\overline{r} $ and so $ \overline{a}.D\phi (R)=0 $ and $ \overline{a}.D\psi (R)\neq 0$, hence $ E\models \phi (\overline{a})\wedge \neg\psi (\overline{a})$; thus $ \overline{a} \in\phi (E)\setminus \psi (E)$ as required.
\end{proof}

If $ M\in {\mathcal A}(R) $ then, by definition and elementary duality, $ M=D\psi (R)/D\phi (R) $ for some pp-pair $ \phi /\psi  $ for right modules.  We saw above that $ (\phi /\psi )\cap {\rm inj}_R $ is a union of open sets of the form $ (R/(D\phi (R):r)) $ (intersected with $ {\rm inj}_R$), so $ (M)\cap {\rm inj}_R $ is a union of sets of the form $ (R/I)\cap {\rm inj}_R $ with $ I $ a pp-definable (from the left) right ideal, as claimed above.

If $ R $ is right coherent then the Ziegler-closed subsets of $ {\rm inj}_R $ are those of the form $ {\mathcal F}_\tau \cap {\rm inj}_R $ for $ \tau  $ a finite type torsion theory on $ {\rm Mod}\mbox{-}R $ (see \cite[5.1.11]{PreNBK}. Puninski's example, \cite[\S4]{GarkPre5}, shows that for a non-coherent (even commutative) ring not every Ziegler-closed subset of the Ziegler-closure of $ {\rm inj}_R $ in $ {\rm pinj}_R $ need have that form.  (If, however, we go up one representation level then, as described at the beginning of this section, for any ring $ R $ the Ziegler-closed subsets of $ {\rm pinj}_R $ are, {\it via} the embedding $ M\mapsto M\otimes_R(-)\in (R\mbox{-}{\rm mod},{\bf Ab}) $ into the functor category, exactly those of the form $ {\rm pinj}_R\cap {\mathcal F}_\tau  $ where $ \tau  $ is a finite type torsion theory).

\vspace{4pt}

\noindent 3. Successively coarser topologies on $ {\rm inj}_R $ of Ziegler type have (sub)bases:  $(M) $ with $ M $ finitely presented; $(R/I) $ with $ I $ finitely generated;
$(M) $ with $ M $ coherent, equivalently $ (R/I) $ with $ R/I $ coherent since if $ M $ is coherent then it will have a composition chain with cyclic coherent factors, $ R/I_1, \dots, R/I_n $ say, and then $ (M)\cap {\rm inj}_R =\bigcup _1^n (R/I_i)\cap {\rm inj}_R$; $(M) $ with $ M $ ${\rm FP}_\infty $ (indeed, there is a hierarchy obtained by restricting $ M $ to be $ {\rm FP}_n$ and varying $n$).

If $ R $ is right coherent then all these coincide with the Ziegler topology; Puninski's example referred to above shows that the second can be strictly coarser than the Ziegler topology. The second, that is, with basis the $ (R/I) $ where $ I $ is a finitely generated right ideal, is the {\bf Thomason} or {\bf Zariski}$^\ast $ {\bf topology} where $ ^\ast  $ refers to duality in the sense of \cite{Hoc}. This topology is dual to the fg-ideals Zariski-type topology, which has basis of opens the $ [R/I] $ with $ I $ finitely generated.  By \cite[2.2]{GarkPre5} the closed sets in the Zariski$^\ast $ topology are in natural bijection with the torsion theories of finite type on $ {\rm Mod}\mbox{-}R$, at least if $ R $ is commutative.  For more on these and for results which explain the terminology, see \cite{GarkPre3}, \cite{GarkPre5}.

\subsection{Zariski-type topologies on $ {\rm inj}_R$}

\vspace{4pt}

\noindent 1. The finest Zariski-type topology has, for a basis of opens, the $ [M]=\{ E\in {\rm inj}_R: (M,E)=0\}$ where $M$ can be any module. Given a module $ M$, the open set $ [M] $ is $ {\mathcal F}_\tau \cap {\rm inj}_R $ where $ \tau  $ is the hereditary torsion theory with torsion class generated by $ M$, and so arbitrary open sets are unions of such sets.

\vspace{4pt}

\noindent 2. Coarser is the topology with basis the $ [M] $ where $ M\in {\mathcal A}(R)$. If $ M=D\psi (R)/D\phi (R) $ then, by what has been seen already, $ [M]\cap {\rm inj}_R=[\phi /\psi ]\cap {\rm inj}_R $ so this is the rep-Zariski = dual-Ziegler topology.

We have seen that $ (M)\cap {\rm inj}_R $ is a union of sets of the form $ (R/I)\cap {\rm inj}_R$. If $ R $ is right coherent then $ (M)\cap {\rm inj}_R $ will be compact, so a finite union will do and, in that case, the $ [R/I] $ with $ R/I\in {\mathcal A}(R)$, equivalently with $ I\in {\mathcal A}(R)$, will give a basis.

\vspace{4pt}

\noindent 3. We may take just the $ [R/I] $ with $ I $ finitely generated for a basis for a topology. Then $ [R/I]\cap {\rm inj}_R $ will be the set of indecomposable injectives which are torsionfree for the (finite type) torsion theory on $ {\rm Mod}\mbox{-}R $ with torsion class generated by $ R/I$. Intersections of such sets, rather than their unions, seem to be more natural and it is the dual of this {\bf fg ideals topology} (${\rm inj}_{\rm fg}R $ in the notation of \cite{GarkPre5}) which seems to be the more interesting.

(In fact, that term, introduced in that paper, was applied just over commutative rings and had a somewhat different definition; we just check here that the definitions, there and here, are equivalent.  In \cite[\S3]{GarkPre5} we set $ D^{\rm fg}(I)=\{ E\in {\rm inj}_R: E \mbox{ is } {\rm gen}(R/I)\mbox{-torsionfree}\} $ where $ {\rm gen}(R/I) $ is the torsion theory with Gabriel filter generated by the powers $ I^n $ of $ I$. So it will be sufficient to show that $ D^{\rm fg}(I)=[R/I]$, that is, that $ (R/I,E)=0 $ implies $ (R/I^n,E)=0 $ for all $ n$. Suppose then, that we have $ f:R/I^n\rightarrow E $ non-zero,; we may choose $ n, f $ so that $ n $ is as small as possible. Then there is $ a\in I^{n-1}/I^n $ with $ fa\neq 0$; since $ {\rm ann}_R(a)\geq I $ we have an epimorphism $ R/I\rightarrow aR $ whose composition with $ f $ is non-zero, as required.)

\subsection{Intersection with $ {\rm Spec}(R)$}

Suppose that $ R $ is commutative.  Then $ P\mapsto E_P=E(R/P) $ embeds $ {\rm Spec}(R) $ into $ {\rm inj}_R $ and we will identify the former with its image in the latter.  If $ R $ is noetherian then the map is a bijection.
Clearly, if $ I $ is an ideal of the commutative ring $ R $ then $ (R/I)\cap \,{\rm Spec}(R)=\{ P\in {\rm Spec}(R): I\leq P\} = V(I) $ so $ [R/I]\cap \,{\rm Spec}(R) =D(I)$.  From the discussion and results quoted above we have the following.

\begin{cor}\label{intersectspec2}\marginpar{intersectspec2} For a commutative ring $ R $ the restrictions to $ {\rm Spec}(R) $ of the various Ziegler-type toplogies have, for their open sets, respectively the following types of set:

1. full support topology - $ \bigcup _\lambda  V(I_\lambda ) $ for any set $ \{ I_\lambda \}_\lambda  $ of ideals $ = $ the specialisation-closed sets;

2. Ziegler topology - $ \bigcup _\lambda  V(I_\lambda ) $ for any set $ \{ I_\lambda \}_\lambda  $ of pp-definable ideals, i.e.~with $ I_\lambda \in {\mathcal A}(R) $ for all $ \lambda $;

3. Zariski$^\ast $=Thomason topology - $ \bigcup _\lambda  V(I_\lambda ) $ for any set $ \{ I_\lambda \}_\lambda  $ of finitely generated ideals.

If $ R $ is coherent then 2.~and 3.~coincide; if $ R $ is noetherian then all three coincide.
\end{cor}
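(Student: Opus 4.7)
The plan is to read off each description by restricting the bases of the three Ziegler-type topologies, as catalogued in items 1--3 of Section 6.1, to $ {\rm Spec}(R) $, using the identity $ (R/I)\cap {\rm Spec}(R) = V(I) $ recalled just before the corollary. A basic open of each topology has the form $ (R/I) $ for $ I $ in the appropriate class of right ideals: arbitrary for the full support case (since $ (M)=\bigcup _{m\in M}(Rm) $ and $ Rm\cong R/{\rm ann}(m) $, using injectivity of the codomain); pp-definable, i.e.~those $ I\in {\mathcal A}(R) $, for the Ziegler case; and finitely generated for the Zariski$^{\ast}$=Thomason case. Arbitrary opens are unions of these, so intersecting with $ {\rm Spec}(R) $ gives, in each case, a union $ \bigcup _\lambda V(I_\lambda ) $ with the $ I_\lambda $ in the relevant class.

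For (1) the extra ingredient is the identification of unions of $ V(I) $'s with specialisation-closed subsets of $ {\rm Spec}(R) $. In one direction, each $ V(I) $ is Zariski-closed in $ {\rm Spec}(R) $ and therefore specialisation-closed, so arbitrary unions remain specialisation-closed. Conversely, any specialisation-closed $ X\subseteq {\rm Spec}(R) $ equals $ \bigcup _{P\in X}\overline{\{P\}} = \bigcup _{P\in X}V(P) $, which is manifestly a union of sets $ V(I) $. For (2), the key input is Lemma \ref{injann3} and the comment immediately after it: for $ M=D\psi (R)/D\phi (R)\in {\mathcal A}(R) $ the set $ (M)\cap {\rm inj}_R $ is a union of sets $ (R/I)\cap {\rm inj}_R $ with $ I $ a pp-definable right ideal, and all pp-definable right ideals arise this way, so $ \{(R/I):I\in {\mathcal A}(R)\} $ really is a basis. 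For (3) the basis is by definition the $ (R/I) $ with $ I $ finitely generated, and nothing more is needed.

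For the collapse statements: if $ R $ is coherent then $ {\rm mod}\mbox{-}R $ is itself abelian, so $ {\mathcal A}(R)={\rm mod}\mbox{-}R $, and an ideal $ I $ lies in $ {\mathcal A}(R) $ iff it is finitely presented; in a coherent ring a finitely generated submodule of $ R $ is automatically finitely presented, so the class of $ I\in {\mathcal A}(R) $ coincides with the class of finitely generated ideals, giving (2)$\,=\,$(3). If $ R $ is noetherian then every ideal is finitely generated (in particular lies in $ {\mathcal A}(R) $), so (1), (2) and (3) all describe the same family of subsets of $ {\rm Spec}(R) $. There is no real obstacle here: the proof is essentially bookkeeping off the earlier results in Section 6, with the only conceptual point being the equivalence, in case (1), between the unions-of-$ V(I) $ description and the specialisation-closed description.
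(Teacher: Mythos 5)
Your proposal is correct and matches the paper, which gives no separate argument for this corollary but simply derives it from the preceding discussion: the identity $(R/I)\cap{\rm Spec}(R)=V(I)$, the descriptions of the bases of the three topologies in \S 6.1, and (for the collapse statements) the coincidence of the relevant classes of ideals when $R$ is coherent, respectively noetherian. Your added details — the identification of unions of $V(I)$'s with specialisation-closed sets via $X=\bigcup_{P\in X}V(P)$, and the observation that for coherent $R$ one has ${\mathcal A}(R)={\rm mod}\mbox{-}R$ so that the ideals in ${\mathcal A}(R)$ are exactly the finitely generated ones — are exactly the bookkeeping the paper leaves implicit.
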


(Somewhat different notation was used in \cite[\S3.3,3.4]{GarkPre5}.)

We recall (\cite{PreZar}, see \cite[3.3, 3.5]{GarkPre5}) that if $ R $ is commutative and $ E\in {\rm inj}_R$ we set $ P(E)=\{ r\in R: ar=0 \mbox{ for some non-zero } a\in E\}$ - a prime ideal which is such that $ E $ and $ E_{P(E)} $ are topologically indistinguishable in both the Zariski$^\ast $=Thomason and the fg-ideals topologies.
Hence (\cite[3.6,3.7]{GarkPre5}) the embedding of $ {\rm Spec}(R) $ into $ {\rm inj}_R $ is a topological equivalence in both the Zariski$^\ast $=Thomason and the fg-ideals topologies.

It is also the case, \cite[Thm.~p.~395]{GarkPre5}, that for every commutative ring $ R $ there is a natural bijection between the torsion theories of finite type on $ {\rm Mod}\mbox{-}R $ and the open subsets of $ {\rm Spec}(R) $ equipped with the Zariski$^\ast $=Thomason topology.  It is shown in \cite{PreAxtFlat} that $ {\rm fun}(\langle {\rm Inj}\mbox{-}R\rangle) \simeq {\rm mod}\mbox{-}R $.

\begin{cor}\label{spandspec}\marginpar{spandspec} If $ R $ is commutative coherent then we have a natural bijection $ {\rm Sp}({\rm mod}\mbox{-}R) \leftrightarrow {\rm Spec}(R) $ between the $ \cap $-irreducible Serre subcategories of $ {\rm mod}\mbox{-}R $ and the prime ideals of $ R$.
\end{cor}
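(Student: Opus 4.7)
The plan is to adapt the argument of Example~\ref{commnoethirred} together with Proposition~\ref{commnoethprim} to the coherent setting, using the translation between Serre subcategories of ${\rm mod}\mbox{-}R$ and finite-type torsion theories on ${\rm Mod}\mbox{-}R$ (available since $R$ is coherent), composed with the cited bijection \cite[Thm~p.~395]{GarkPre5} between such torsion theories and the Thomason-open subsets of ${\rm Spec}(R)$.  This yields a lattice isomorphism between ${\rm Ser}({\rm mod}\mbox{-}R)$ and the lattice of Thomason-open subsets of ${\rm Spec}(R)$, sending ${\mathcal S}$ to the complement in ${\rm Spec}(R)$ of $D_{\tau_{\mathcal S}} = \{P : R/P \in {\mathcal F}_{\tau_{\mathcal S}}\}$, so that ${\rm Sp}({\rm mod}\mbox{-}R)$ corresponds to the set of irreducible Thomason-closed subsets of ${\rm Spec}(R)$.

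The candidate bijection is $P \mapsto {\mathcal S}_P = \{A \in {\rm mod}\mbox{-}R : (A, E(R/P)) = 0\}$.  This is a Serre subcategory --- the intersection with ${\rm mod}\mbox{-}R$ of the torsion class of ${\rm cog}(E(R/P))$ --- and its $\cap$-irreducibility follows verbatim from \ref{commnoethirred}$(\Leftarrow)$, which uses only the uniformity of $E(R/P)$.  Injectivity is easy: if $P \neq Q$ then, picking (say) $x \in P \setminus Q$, the module $R/(x)$ lies in ${\mathcal S}_Q$ but not in ${\mathcal S}_P$, using that $(R/I, E(R/P)) \neq 0$ iff $I \subseteq P$ (proved by essentiality of $R/P \hookrightarrow E(R/P)$, as in the paragraph before \ref{spandspec}).

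For surjectivity, I would work on the ${\rm Spec}(R)$ side.  Given ${\mathcal S} \in {\rm Sp}({\rm mod}\mbox{-}R)$ with associated finite-type torsion theory $\tau$, the subset $D_\tau$ is irreducible Thomason-closed.  Finite-type of $\tau$ yields via Zorn that $D_\tau$ admits a maximal prime (an ascending union of primes in $D_\tau$ remains in $D_\tau$, since any finitely generated $I \in {\mathcal U}_\tau$ contained in the union lies in some member of the chain, and the usual argument then shows maximal elements of the complement of ${\mathcal U}_\tau$ are prime).  Irreducibility forces this maximal prime to be unique: two incomparable maximal primes $P_1, P_2$ would give a nontrivial decomposition $D_\tau = D' \cup D''$ into strictly smaller Thomason-closed subsets, where one takes $D' = \{Q : Q \subseteq P_1\}$ (the Thomason-closure of $\{P_1\}$) and $D''$ to be the Thomason-closure of $D_\tau \setminus D'$, which does not contain $P_1$ because the finite-type property produces a finitely generated ideal $I \subseteq P_1$ with $V(I) \cap D_\tau \subseteq D'$.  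The unique maximum $P$ then gives $D_\tau = \{Q : Q \subseteq P\} = D_{{\rm cog}(E(R/P))}$, whence $\tau = {\rm cog}(E(R/P))$ and ${\mathcal S} = {\mathcal S}_P$.

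The main obstacle is the surjectivity step: in the noetherian argument of \ref{commnoethirred}$(\Rightarrow)$ one uses $R/P \in {\rm mod}\mbox{-}R$ freely, but in the coherent setting $P$ need not be finitely generated, so one must work throughout with finitely generated subideals of $P$ (as is natural given the finite-type of $\tau$).  The delicate technical point is verifying that, for a chosen maximal prime $P_1$ of $D_\tau$, the ``rest'' $D_\tau \setminus \{Q : Q \subseteq P_1\}$ has Thomason-closure strictly smaller than $D_\tau$ --- equivalently, that some finitely generated $I \subseteq P_1$ satisfies $V(I) \cap D_\tau \subseteq \{Q : Q \subseteq P_1\}$ --- which is what actually forces uniqueness of the maximal prime and completes the identification ${\mathcal S} = {\mathcal S}_P$.
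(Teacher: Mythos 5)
Your overall route is the one the paper intends: the corollary is stated there without proof, as a consequence of the cited bijection \cite[Thm.~p.~395]{GarkPre5} between finite-type torsion theories and Thomason-open subsets of ${\rm Spec}(R)$ (combined, via coherence, with the identification of these torsion theories with Serre subcategories of ${\rm mod}\mbox{-}R$), together with the sobriety of the Thomason topology.  Your candidate map $P\mapsto {\mathcal S}_P$ and the injectivity argument are fine.  But the step you yourself flag as ``the delicate technical point'' is a genuine gap, and it is the crux: the assertion that, for a maximal prime $P_1$ of $D_\tau$, finite type ``produces a finitely generated $I\subseteq P_1$ with $V(I)\cap D_\tau\subseteq\{Q:Q\subseteq P_1\}$'' is essentially the sobriety statement itself, and I do not see how to derive it from finite type plus maximality of $P_1$; as written the uniqueness argument is unsupported.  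The clean way to close it dispenses with Zorn and maximal primes: set ${\mathcal F}=\{I\mbox{ f.g.}: V(I)\cap D_\tau\neq\emptyset\}$.  Irreducibility of $D_\tau$ shows ${\mathcal F}$ is closed under finite sums: if $V(I)\cap D_\tau$ and $V(I')\cap D_\tau$ are nonempty but $V(I+I')\cap D_\tau=V(I)\cap V(I')\cap D_\tau=\emptyset$, then $D_\tau=(D_\tau\setminus V(I))\cup(D_\tau\setminus V(I'))$ is a union of two proper Thomason-closed subsets.  Hence $P:=\sum_{I\in{\mathcal F}}I$ is a proper ideal whose finitely generated subideals are exactly the members of ${\mathcal F}$; it is prime (if $ab\in P$ with $V(a)\cap D_\tau=\emptyset=V(b)\cap D_\tau$, any $Q\in V(ab)\cap D_\tau$ gives a contradiction), and one checks $D_\tau=\{Q:Q\subseteq P\}$ using that the complement of $D_\tau$ is a union of sets $V(I)$ with $I$ finitely generated.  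Alternatively, simply invoke Hochster \cite{Hoc}: the inverse topology of a spectral space is spectral, hence sober.

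A secondary point: the $(\Leftarrow)$ direction of \ref{commnoethirred} does not transfer quite ``verbatim''.  In the noetherian case a common nonzero submodule of ${\rm im}(f_1)$ and ${\rm im}(f_2)$ inside $E(R/P)$ is automatically finitely generated, hence finitely presented; for coherent non-noetherian $R$ a nonzero $a\in{\rm im}(f_1)\cap{\rm im}(f_2)$ generates $aR\simeq R/{\rm ann}(a)$, which need not be finitely presented (for instance ${\rm ann}(a)=P$ with $P$ not finitely generated), so it does not directly witness $\langle A_1\rangle\cap\langle A_2\rangle\neq 0$ inside ${\rm Ser}({\rm mod}\mbox{-}R)$.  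This is repairable: writing $a=f_1(a_1)=f_2(a_2)$, coherence makes each $a_iR$ finitely presented, hence each ${\rm ann}(a_i)$ finitely generated, and $C=R/({\rm ann}(a_1)+{\rm ann}(a_2))$ is a nonzero (because ${\rm ann}(a_1)+{\rm ann}(a_2)\subseteq{\rm ann}(a)\subseteq P$) finitely presented common subquotient of $A_1$ and $A_2$ within ${\rm mod}\mbox{-}R$.
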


\end{document}